\title{Extremal Properties of Tropical Eigenvalues and Solutions to Tropical Optimization Problems}
\author{Nikolai Krivulin\thanks{Faculty of Mathematics and Mechanics, Saint Petersburg State University, 28 Universitetsky Ave., Saint Petersburg, 198504, Russia, 
nkk@math.spbu.ru.}\thanks{The work was supported in part by the Russian Foundation for Humanities under Grant \#13-02-00338.}
}
\date{}
\newtheorem{theorem}{Theorem}
\newtheorem{lemma}[theorem]{Lemma}
\newtheorem{corollary}[theorem]{Corollary}
\begin{document}

\maketitle

\begin{abstract}
An unconstrained optimization problem is formulated in terms of tropical mathematics to minimize a functional that is defined on a vector set by a matrix and calculated through multiplicative conjugate transposition. For some particular cases, the minimum in the problem is known to be equal to the tropical spectral radius of the matrix. We examine the problem in the common setting of a general idempotent semifield. A complete direct solution in a compact vector form is obtained to this problem under fairly general conditions. The result is extended to solve new tropical optimization problems with more general objective functions and inequality constraints. Applications to real-world problems that arise in project scheduling are presented. To illustrate the results obtained, numerical examples are also provided.
\\

\textbf{Key-Words:} idempotent semifield, eigenvalue, linear inequality, optimization problem, direct solution, project scheduling.
\\

\textbf{MSC (2010):} 65K10, 15A80, 65K05, 90C48, 90B35
\end{abstract}

\section{Introduction}

Tropical (idempotent) mathematics, which focuses on the theory and applications of idempotent semirings, dates back to a few works published in the early 1960s, such as seminal papers by Pandit \cite{Pandit1961Anew}, Cuninghame-Green \cite{Cuninghamegreen1962Describing}, Giffler \cite{Giffler1963Scheduling}, Vorob{'}ev \cite{Vorobjev1963Theextremal} and Romanovski{\u\i} \cite{Romanovskii1964Asymptotic}. Since these works, there have been many new important results obtained in this field and reported in various monographs, including the most recent books by Golan \cite{Golan2003Semirings}, Heidergott et al \cite{Heidergott2006Maxplus}, Gondran and Minoux \cite{Gondran2008Graphs}, Butkovi\v{c} \cite{Butkovic2010Maxlinear}, and in a great number of contributed papers.

Optimization problems that are formulated and solved in the tropical mathematics setting constitute an important research domain within the field. An optimization problem, which is drawn from machine scheduling, arose in the early paper by Cuninghame-Green \cite{Cuninghamegreen1962Describing} to minimize a functional defined on a vector set by a given matrix. In terms of the semifield $\mathbb{R}_{\max,+}$ with the usual maximum in the role of addition and arithmetic addition in the role of multiplication, this problem is represented in the form
\begin{equation*}
\begin{aligned}
&
\text{minimize}
&&
\bm{x}^{-}\bm{A}\bm{x},
\end{aligned}
\end{equation*}
where $\bm{A}$ is a square matrix, $\bm{x}$ is the unknown vector, $\bm{x}^{-}$ is the multiplicative conjugate transpose of $\bm{x}$, and the matrix-vector operations follow the usual rules with the scalar addition and multiplication defined by the semifield.

The solution to the problem is based on a useful property of tropical eigenvalues. It has been shown in \cite{Cuninghamegreen1962Describing} that the minimum in the problem coincides with the tropical spectral radius (maximum eigenvalue) of $\bm{A}$ and is attained at any tropical eigenvector corresponding to this radius. The same results in a different setting were obtained by Engel and Schneider \cite{Engel1975Diagonal}.

In recent years, the above optimization problem has appeared in various applied contexts, which motivates further development of related solutions. Specifically, the problem was examined by Elsner and van den Driessche \cite{Elsner2004Maxalgebra,Elsner2010Maxalgebra} in the framework of decision making. Applications to discrete event systems and location analysis were considered in Krivulin \cite{Krivulin2005Evaluation,Krivulin2009Methods,Krivulin2011Anextremal,Krivulin2012Anew}.

In tropical algebra, the spectral radius is given directly by a closed-form expression, which makes the evaluation of the minimum a rather routine task. By contrast, it may be not trivial to represent all solutions to the problem in terms of tropical mathematics in a direct, explicit form that is suitable for further analysis and applications.

Cuninghame-Green \cite{Cuninghamegreen1979Minimax} proposed a solution that is based on reducing to a linear programming problem. This solution, however, does not offer a direct representation of all solutions in terms of tropical mathematics. Furthermore, Elsner and van den Driessche \cite{Elsner2004Maxalgebra,Elsner2010Maxalgebra} indicated that, in addition to the eigenvectors, there are other vectors, which can also solve the problem. An implicit description of the complete solution set has been suggested in the form of a tropical linear inequality. Though a solution approach has been proposed, this approach provided particular solutions by means of a numerical algorithm, rather than a complete solution in an explicit form. Note finally that the existing approaches mainly concentrate on the solution in terms of the semifield $\mathbb{R}_{\max,+}$, and assume that the matrix $\bm{A}$ is irreducible. 

In this paper, we examine the problem in the common setting of a general idempotent semifield. A complete direct solution in a compact vector form is obtained to this problem under fairly general assumptions. We apply and further develop the algebraic technique introduced in \cite{Krivulin2004Onsolution,Krivulin2005Evaluation,Krivulin2006Solution,Krivulin2009Onsolution,Krivulin2009Methods}, which combines general methods of solution of tropical linear inequalities with results of the tropical spectral theory. On the basis of the solution to the above problem, we offer complete direct solutions to new problems with more complicated objective functions and linear inequality constraints. Applications of these results to real-world problems that arise in project scheduling and numerical examples are also provided.

The paper is organized as follows. We start with Section~\ref{S-pdar}, where a short, concise overview of notation, definitions, and preliminary results in tropical algebra is given to provide a common framework in terms of a general idempotent semifield for subsequent results. Complete solutions to various linear inequalities together with new direct proofs, which are of independent interest, are presented in Section~\ref{S-stli}. Furthermore, Section~\ref{S-epoeaop} offers a new complete direct solution to the above optimization problem and provides examples of known solutions to more general optimization problems. Finally, in Section~\ref{S-ntop}, two new tropical optimization problems are examined. We obtain complete solutions to the problems and illustrate these results with applications  and numerical examples.

\section{Preliminary definitions and results}
\label{S-pdar}

The purpose of this section is to offer an overview of the necessary notation, definitions, and preliminary results of tropical (idempotent) algebra in a short and concise manner. Both thorough exposition of and compendious introduction to the theory and methods of tropical mathematics are provided by many authors, including recent publications by Golan \cite{Golan2003Semirings}, Heidergott et al \cite{Heidergott2006Maxplus}, Akian et al \cite{Akian2007Maxplus}, Litvinov \cite{Litvinov2007Themaslov}, Gondran and Minoux \cite{Gondran2008Graphs}, Butkovi\v{c} \cite{Butkovic2010Maxlinear}. 

In the overview below, we consider a general idempotent semifield and mainly follow the notation and presentation in \cite{Krivulin2006Solution,Krivulin2009Methods,Krivulin2011Anextremal,Krivulin2012Anew,Krivulin2013Amultidimensional} to provide the basis for the derivation of subsequent results in the most possible common general form. Further details of the theory and methods, as well as related references can be found in the publications listed before.

\subsection{Idempotent semifield}

Suppose that $\mathbb{X}$ is a nonempty set with binary operations, addition $\oplus$ and multiplication $\otimes$. Addition and multiplication are associative and commutative, and have respective neutral elements, zero $\mathbb{0}$ and identity $\mathbb{1}$. Multiplication is distributive over addition and has $\mathbb{0}$ as absorbing element.

Furthermore, addition is idempotent, which means that $x\oplus x=x$ for all $x\in\mathbb{X}$. Multiplication is invertible in the sense that for each $x\ne\mathbb{0}$, there exists $x^{-1}$ such that $x^{-1}\otimes x=\mathbb{1}$. With these properties, the system $\langle\mathbb{X},\mathbb{0},\mathbb{1},\oplus,\otimes\rangle$ is generally identified as the idempotent semifield.

The power notation is routinely defined for all $x\ne\mathbb{0}$ and integer $p\geq1$ as $x^{p}=x^{p-1}\otimes x$, $x^{-p}=(x^{-1})^{p}$, $x^{0}=\mathbb{1}$, and $\mathbb{0}^{p}=\mathbb{0}$. Moreover, the integer power is held to extend to rational exponents, and thus the semifield is taken algebraically complete (radicable).

To save writing, the multiplication sign $\otimes$ is omitted from here on. The power notation is used in the sense of the above definition.

The idempotent addition defines a partial order such that $x\leq y$ if and only if $x\oplus y=y$ for $x,y\in\mathbb{X}$. With respect to this order, the addition possesses an extremal property, which implies that the inequalities $x\leq x\oplus y$ and $y\leq x\oplus y$ hold for all $x,y\in\mathbb{X}$. Both addition and multiplication are monotone in each argument, which means that the inequalities $x\leq y$ and $u\leq v$ imply $x\oplus u\leq y\oplus v$ and $xu\leq yv$, respectively.

The partial order is considered extendable to a total order to make the semifield linearly ordered. In what follows, the relation signs and optimization problems are thought of in terms of this linear order. 

Common examples of the idempotent semifield under consideration are $\mathbb{R}_{\max,+}=\langle\mathbb{R}\cup\{-\infty\},-\infty,0,\max,+\rangle$, $\mathbb{R}_{\min,+}=\langle\mathbb{R}\cup\{+\infty\},+\infty,0,\min,+\rangle$, $\mathbb{R}_{\max,\times}=\langle\mathbb{R}_{+}\cup\{0\},0,1,\max,\times\rangle$, and $\mathbb{R}_{\min,\times}=\langle\mathbb{R}_{+}\cup\{+\infty\},+\infty,1,\min,\times\rangle$, where $\mathbb{R}$ is the set of real numbers and $\mathbb{R}_{+}=\{x\in\mathbb{R}|x>0\}$.

Specifically, the semifield $\mathbb{R}_{\max,+}$ is endowed with an addition and multiplication, defined by the usual maximum operation and arithmetic addition, respectively. The number $-\infty$ is taken as the zero element, and $0$ as the identity. For each $x\in\mathbb{R}$, there exists an inverse $x^{-1}$, which coincides with $-x$ in ordinary arithmetic. The power $x^{y}$ is defined for all $x,y\in\mathbb{R}$ and equal to the arithmetic product $xy$. The order induced on $\mathbb{R}_{\max,+}$ by the idempotent addition corresponds to the natural linear order on $\mathbb{R}$.

In the semifield $\mathbb{R}_{\min,\times}$, the operations are defined as $\oplus=\min$ and $\otimes=\times$, and their neutral elements as $\mathbb{0}=+\infty$ and $\mathbb{1}=1$. Both inversion and exponentiation have the usual meaning. The relation $\leq$, defined by idempotent addition, corresponds to an order that is opposite to the natural order on $\mathbb{R}$.

\subsection{Matrix algebra}

The idempotent semifield is routinely generalized to idempotent systems of matrices and vectors. Let $\mathbb{X}^{m\times n}$ be the set of matrices over $\mathbb{X}$, with $m$ rows and $n$ columns. The matrix with all zero entries is the zero matrix denoted $\bm{0}$. Any matrix without zero rows (columns) is called row- (column-) regular.

Addition and multiplication of conforming matrices, and scalar multiplication follow the usual rules, where the operations $\oplus$ and $\otimes$ play the roles of the ordinary addition and multiplication, respectively. The extremal property of addition and the monotonicity of addition and multiplication in the carrier semifield are extended component-wise to the matrix operations.

For any matrix $\bm{A}$, its transpose is represented by $\bm{A}^{T}$.

Consider square matrices in the set $\mathbb{X}^{n\times n}$. A matrix that has $\mathbb{1}$ along the diagonal and $\mathbb{0}$ elsewhere is the identity matrix, which is denoted by $\bm{I}$. For any matrix $\bm{A}\in\mathbb{X}^{n\times n}$ and integer $p\geq1$, the matrix power is immediately defined by $\bm{A}^{p}=\bm{A}^{p-1}\bm{A}$ and $\bm{A}^{0}=\bm{I}$.

A matrix is reducible if identical permutation of its rows and columns can reduce it to a block-triangular normal form, and irreducible otherwise. The lower block-triangular normal form of a matrix $\bm{A}\in\mathbb{X}^{n\times n}$ is expressed as
\begin{equation}\label{E-MNF}
\bm{A}
=
\left(
	\begin{array}{cccc}
		\bm{A}_{11}	& \bm{0}			& \ldots	& \bm{0} \\
		\bm{A}_{21}	& \bm{A}_{22}	&					& \bm{0} \\
		\vdots			& \vdots			& \ddots	& \\
		\bm{A}_{s1}	& \bm{A}_{s2}	& \ldots	&	\bm{A}_{ss}
	\end{array}
\right),
\end{equation}
where $\bm{A}_{ii}$ is either irreducible or a zero matrix of order $n_{i}$, $\bm{A}_{ij}$ is an arbitrary matrix of size $n_{i}\times n_{j}$ for all $i=1,\ldots,s$, $j<i$, and $n_{1}+\cdots+n_{s}=n$.

The trace of any matrix $\bm{A}=(a_{ij})\in\mathbb{X}^{n\times n}$ is given by
$$
\mathop\mathrm{tr}\bm{A}
=
a_{11}\oplus\cdots\oplus a_{nn}.
$$

It directly results from the definition that, for any matrices $\bm{A}$ and $\bm{B}$, and scalar $x$, the following equalities hold: 
$$
\mathop\mathrm{tr}(\bm{A}\oplus\bm{B})
=
\mathop\mathrm{tr}\bm{A}
\oplus
\mathop\mathrm{tr}\bm{B},
\qquad
\mathop\mathrm{tr}(\bm{A}\bm{B})
=
\mathop\mathrm{tr}(\bm{B}\bm{A}),
\qquad
\mathop\mathrm{tr}(x\bm{A})
=
x\mathop\mathrm{tr}\bm{A}.
$$

The first equality is extended in \cite{Krivulin2012Atropical} to a binomial identity for traces, which is valid, for all $m\geq0$, in the form 
\begin{equation}
\mathop\mathrm{tr}(\bm{A}\oplus\bm{B})^{m}
=
\bigoplus_{k=1}^{m}\mathop{\bigoplus\hspace{2.3em}}_{i_{1}+\cdots+i_{k}=m-k}\mathop\mathrm{tr}(\bm{A}\bm{B}^{i_{1}}\cdots\bm{A}\bm{B}^{i_{k}})
\oplus
\mathop\mathrm{tr}\bm{B}^{m}.
\label{E-trABm}
\end{equation}

Any matrix, which has only one row (column) is a row (column) vector. The set of column vectors of order $n$ is denoted by $\mathbb{X}^{n}$. The vector with all zero elements is the zero vector denoted $\bm{0}$.

A vector is called regular if it has no zero elements. Suppose that $\bm{x}\in\mathbb{X}^{n}$ is a regular vector. Clearly, if $\bm{A}\in\mathbb{X}^{n\times n}$ is a row-regular matrix, then the vector $\bm{A}\bm{x}$ is regular, and if $\bm{A}$ is column-regular, then $\bm{x}^{T}\bm{A}$ is regular.

For any nonzero column vector $\bm{x}=(x_{i})\in\mathbb{X}^{n}$, its multiplicative conjugate transpose is a row vector $\bm{x}^{-}=(x_{i}^{-})$ that has elements $x_{i}^{-}=x_{i}^{-1}$ if $x_{i}\ne\mathbb{0}$, and $x_{i}^{-}=\mathbb{0}$ otherwise.

The following properties of the conjugate transposition are easy to verify. Let $\bm{x},\bm{y}\in\mathbb{X}^{n}$ be regular vectors. Then, the element-wise inequality $\bm{x}\leq\bm{y}$ results in $\bm{x}^{-}\geq\bm{y}^{-}$ and vice versa. Furthermore, the matrix inequality $\bm{x}\bm{y}^{-}\geq(\bm{x}^{-}\bm{y})^{-1}\bm{I}$ holds entry-wise. If $\bm{x}$ is a nonzero vector, then $\bm{x}^{-}\bm{x}=\mathbb{1}$.

A scalar $\lambda\in\mathbb{X}$ is an eigenvalue of a matrix $\bm{A}\in\mathbb{X}^{n\times n}$, if there exists a nonzero vector $\bm{x}\in\mathbb{X}^{n}$ such that
$$
\bm{A}\bm{x}
=
\lambda\bm{x}.
$$

Any vector $\bm{x}$ that provides this equality is an eigenvector of $\bm{A}$, which corresponds to $\lambda$.

Every irreducible matrix has only one eigenvalue. The eigenvectors of irreducible matrices have no zero components and so are regular. 

Reducible matrices may possess several eigenvalues. The maximal eigenvalue (in the sense of the order on $\mathbb{X}$) of a matrix $\bm{A}\in\mathbb{X}^{n\times n}$ is called the spectral radius of the matrix, and is directly calculated as
$$
\lambda
=
\bigoplus_{m=1}^{n}\mathop\mathrm{tr}\nolimits^{1/m}(\bm{A}^{m}).
$$

From this equality, it follows, in particular, that the inequality $\lambda^{m}\geq\mathop\mathrm{tr}\bm{A}^{m}$ is valid for all $m=1,\ldots,n$.

\section{Solution to linear inequalities}
\label{S-stli}

This section presents complete solutions to linear inequalities to be used in the solving of optimization problems. These results can be obtained as consequences of the solutions of related equations in \cite{Krivulin2004Onsolution,Krivulin2006Solution,Krivulin2009Onsolution,Krivulin2009Methods,Krivulin2013Amultidimensional}. Below, we offer the solutions provided with new direct independent proofs, which are of separate interest.

Suppose that, given a matrix $\bm{A}\in\mathbb{X}^{m\times n}$ and a regular vector $\bm{d}\in\mathbb{X}^{m}$, the problem is to find all vectors $\bm{x}\in\mathbb{X}^{n}$ that satisfy the inequality
\begin{equation}
\bm{A}\bm{x}
\leq
\bm{d}.
\label{I-Axd}
\end{equation}

A solution to the problem is provided by the next statement. 
\begin{lemma}
\label{L-IxdA}
For any column-regular matrix $\bm{A}$ and regular vector $\bm{d}$, all solutions to \eqref{I-Axd} are given by
\begin{equation}
\bm{x}
\leq
(\bm{d}^{-}\bm{A})^{-}.
\label{I-xdA}
\end{equation}
\end{lemma}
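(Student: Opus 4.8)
The plan is to prove the equivalence between the inequality $\bm{A}\bm{x}\leq\bm{d}$ and the bound $\bm{x}\leq(\bm{d}^{-}\bm{A})^{-}$ by establishing both implications, using only the elementary properties of conjugate transposition and monotonicity collected in Section~\ref{S-pdar}. For the first direction, I would assume $\bm{A}\bm{x}\leq\bm{d}$ and apply conjugate transposition, which reverses the inequality to give $(\bm{A}\bm{x})^{-}\geq\bm{d}^{-}$; multiplying on the right by $\bm{A}$ (monotone) yields $(\bm{A}\bm{x})^{-}\bm{A}\geq\bm{d}^{-}\bm{A}$, and then transposing again reverses once more to produce $((\bm{A}\bm{x})^{-}\bm{A})^{-}\leq(\bm{d}^{-}\bm{A})^{-}$. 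The crux of this direction is the auxiliary fact that $\bm{x}\leq((\bm{A}\bm{x})^{-}\bm{A})^{-}$, which follows from the entrywise matrix inequality $\bm{x}\bm{y}^{-}\geq(\bm{x}^{-}\bm{y})^{-1}\bm{I}$ stated in the preliminaries (applied with $\bm{y}=\bm{A}\bm{x}$), combined with $\bm{y}^{-}\bm{y}=\mathbb{1}$; chaining these gives $\bm{x}\leq(\bm{d}^{-}\bm{A})^{-}$.

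For the converse direction, I would start from $\bm{x}\leq(\bm{d}^{-}\bm{A})^{-}$ and multiply on the left by $\bm{A}$ using monotonicity to obtain $\bm{A}\bm{x}\leq\bm{A}(\bm{d}^{-}\bm{A})^{-}$, so it suffices to show $\bm{A}(\bm{d}^{-}\bm{A})^{-}\leq\bm{d}$. Here I would again invoke $\bm{x}\bm{y}^{-}\geq(\bm{x}^{-}\bm{y})^{-1}\bm{I}$, this time with the substitution $\bm{x}\to\bm{d}$ and $\bm{y}\to\bm{A}(\bm{d}^{-}\bm{A})^{-}$, or more directly argue componentwise: the $i$th component of $\bm{A}(\bm{d}^{-}\bm{A})^{-}$ is $\bigoplus_{k}a_{ik}(\bigoplus_{l}d_{l}^{-1}a_{lk})^{-1}$, and for each $k$ the inner sum dominates the single term $d_{i}^{-1}a_{ik}$, so its inverse is bounded above by $d_{i}a_{ik}^{-1}$ (order reversal under inversion), giving $a_{ik}(\bigoplus_{l}d_{l}^{-1}a_{lk})^{-1}\leq d_{i}$; taking the supremum over $k$ preserves the bound, which is exactly $(\bm{A}(\bm{d}^{-}\bm{A})^{-})_{i}\leq d_{i}$.

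The main obstacle, such as it is, lies in handling the conjugate transpose of non-regular quantities carefully: the column vector $\bm{A}\bm{x}$ need not be regular even though $\bm{d}$ is, so the order-reversal property $\bm{u}\leq\bm{v}\iff\bm{u}^{-}\geq\bm{v}^{-}$ must be justified on vectors that may have zero entries — either by reducing to the regular case (using that $\bm{A}\bm{x}\leq\bm{d}$ with $\bm{d}$ regular still allows zero components of $\bm{A}\bm{x}$, handled by treating $\mathbb{0}^{-}=\mathbb{0}$ consistently) or by working throughout with the componentwise definition of $\bm{x}^{-}$. I would therefore prefer to carry the argument out largely in coordinates, where the inequality $\bm{A}\bm{x}\leq\bm{d}$ reads $\bigoplus_{j}a_{ij}x_{j}\leq d_{i}$ for all $i$, equivalently $a_{ij}x_{j}\leq d_{i}$ for all $i,j$, equivalently (using column-regularity so that for each $j$ some $a_{ij}\neq\mathbb{0}$) $x_{j}\leq\bigwedge_{i:\,a_{ij}\neq\mathbb{0}}d_{i}a_{ij}^{-1}=(\bigoplus_{i}d_{i}^{-1}a_{ij})^{-1}=((\bm{d}^{-}\bm{A})^{-})_{j}$; this coordinate computation in fact collapses both directions into a single chain of equivalences and is the cleanest route.
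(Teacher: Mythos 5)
Your final, coordinatewise argument is correct and complete, but it takes a genuinely different route from the paper. The paper never touches coordinates: it multiplies \eqref{I-Axd} on the left by $(\bm{d}^{-}\bm{A})^{-}\bm{d}^{-}$ and uses $(\bm{d}^{-}\bm{A})^{-}\bm{d}^{-}\bm{A}\geq\bm{I}$ together with $\bm{d}^{-}\bm{d}=\mathbb{1}$ to get $\bm{x}\leq(\bm{d}^{-}\bm{A})^{-}$, and conversely multiplies \eqref{I-xdA} by $\bm{A}$ and uses $\bm{d}\bm{d}^{-}\geq\bm{I}$ with $\bm{d}^{-}\bm{A}(\bm{d}^{-}\bm{A})^{-}=\mathbb{1}$. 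The point you should notice is that the paper only ever conjugates the two quantities that are guaranteed regular by hypothesis, namely $\bm{d}$ and the row vector $\bm{d}^{-}\bm{A}$ (regular because $\bm{A}$ is column-regular and $\bm{d}$ is regular); it never forms $(\bm{A}\bm{x})^{-}$. This sidesteps exactly the difficulty you correctly identify in your first sketch: the step $(\bm{A}\bm{x})^{-}\geq\bm{d}^{-}$ fails when $\bm{A}\bm{x}$ has zero components (a zero component gives $\mathbb{0}$ in the conjugate, which does not dominate $d_{i}^{-1}$), and likewise $\bm{x}\leq((\bm{A}\bm{x})^{-}\bm{A})^{-}$ can fail for nonzero $x_{j}$ when the corresponding component of $(\bm{A}\bm{x})^{-}\bm{A}$ vanishes, so that first route is not salvageable as stated. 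Your fallback to coordinates --- $a_{ij}x_{j}\leq d_{i}$ for all $i,j$, hence $x_{j}\leq\bigl(\bigoplus_{i}d_{i}^{-1}a_{ij}\bigr)^{-1}$ using column-regularity and order reversal under inversion --- is a clean chain of equivalences and arguably more transparent; what the paper's version buys is a coordinate-free two-line manipulation in the same algebraic style used throughout the rest of the paper (e.g.\ in Lemma~\ref{L-minxAx} and Theorem~\ref{T-xAxxpqxc}).
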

\begin{proof}
It suffices to verify that inequalities \eqref{I-Axd} and \eqref{I-xdA} are consequences of each other. First, we take inequality \eqref{I-Axd} and multiply it by $(\bm{d}^{-}\bm{A})^{-}\bm{d}^{-}$ from the left. By applying properties of the conjugate transposition, we write
$$
\bm{x}
\leq
(\bm{d}^{-}\bm{A})^{-}\bm{d}^{-}\bm{A}\bm{x}
\leq
(\bm{d}^{-}\bm{A})^{-}\bm{d}^{-}\bm{d}
=
(\bm{d}^{-}\bm{A})^{-},
$$
and thus obtain \eqref{I-xdA}. At the same time, the left multiplication of inequality \eqref{I-xdA} by the matrix $\bm{A}$ and the above mentioned properties give
$$
\bm{A}\bm{x}
\leq
\bm{A}(\bm{d}^{-}\bm{A})^{-}
\leq
\bm{d}\bm{d}^{-}\bm{A}(\bm{d}^{-}\bm{A})^{-}
=
\bm{d},
$$
which yields \eqref{I-Axd} and completes the proof.
\end{proof}

We now consider the following problem. Given a matrix $\bm{A}\in\mathbb{X}^{n\times n}$, find all vectors $\bm{x}\in\mathbb{X}^{n}$ such that
\begin{equation}
\bm{A}\bm{x}
\leq
\bm{x}
\label{I-Axx}.
\end{equation}

To describe a solution to inequality \eqref{I-Axx} in a compact form, we introduce a function that maps each matrix $\bm{A}\in\mathbb{X}^{n\times n}$ onto the scalar
$$
\mathop\mathrm{Tr}(\bm{A})
=
\mathop\mathrm{tr}\bm{A}\oplus\cdots\oplus\mathop\mathrm{tr}\bm{A}^{n},
$$
and use the star operator (the Kleene star), which takes $\bm{A}$ to the matrix
$$
\bm{A}^{\ast}
=
\bm{I}\oplus\bm{A}\oplus\cdots\oplus\bm{A}^{n-1}.
$$

The derivation of the solution is based on the result, which was apparently first suggested by Carr{\'e} \cite{Carre1971Analgebra} and will be referred to as the Carr{\'e} inequality. In terms of the above defined function, the result states that any matrix $\bm{A}$ with $\mathop\mathrm{Tr}(\bm{A})\leq\mathbb{1}$ satisfies the inequality
$$
\bm{A}^{k}
\leq
\bm{A}^{\ast},
\qquad
k
\geq
0.
$$

A complete direct solution to inequality \eqref{I-Axx} is given as follows.

\begin{lemma}\label{L-xAu}
For any matrix $\bm{A}$ with $\mathop\mathrm{Tr}(\bm{A})\leq\mathbb{1}$, all solutions to inequality \eqref{I-Axx} are given by
$$
\bm{x}
=
\bm{A}^{\ast}\bm{u},
\qquad
\bm{u}\in\mathbb{X}^{n}.
$$
\end{lemma}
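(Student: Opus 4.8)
The plan is to prove the two inclusions separately: every vector of the form $\bm{x}=\bm{A}^{\ast}\bm{u}$ solves \eqref{I-Axx}, and conversely every solution of \eqref{I-Axx} can be written in this form.

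For the forward direction, suppose $\bm{x}=\bm{A}^{\ast}\bm{u}$ for some $\bm{u}\in\mathbb{X}^{n}$. I would compute $\bm{A}\bm{x}=\bm{A}\bm{A}^{\ast}\bm{u}=(\bm{A}\oplus\bm{A}^{2}\oplus\cdots\oplus\bm{A}^{n})\bm{u}$. Using the Carr{\'e} inequality, which under the hypothesis $\mathop\mathrm{Tr}(\bm{A})\leq\mathbb{1}$ gives $\bm{A}^{k}\leq\bm{A}^{\ast}$ for all $k\geq0$, each term $\bm{A}^{k}\bm{u}$ with $k\geq1$ is bounded above by $\bm{A}^{\ast}\bm{u}=\bm{x}$; summing over $k=1,\ldots,n$ and using the extremal property of idempotent addition yields $\bm{A}\bm{x}\leq\bm{x}$. (Equivalently one may note $\bm{A}\bm{A}^{\ast}\leq\bm{A}\oplus\bm{A}^{2}\oplus\cdots\oplus\bm{A}^{n}\leq\bm{A}^{\ast}$, the last step again by Carr{\'e}.)

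For the converse, suppose $\bm{x}$ satisfies $\bm{A}\bm{x}\leq\bm{x}$. Iterating the inequality and using monotonicity of multiplication gives $\bm{A}^{k}\bm{x}\leq\bm{x}$ for all $k\geq0$, hence $\bm{A}^{\ast}\bm{x}=(\bm{I}\oplus\bm{A}\oplus\cdots\oplus\bm{A}^{n-1})\bm{x}\leq\bm{x}$ by the extremal property of addition. On the other hand $\bm{A}^{\ast}\geq\bm{I}$ entrywise, so $\bm{A}^{\ast}\bm{x}\geq\bm{x}$, and therefore $\bm{A}^{\ast}\bm{x}=\bm{x}$. Thus taking $\bm{u}=\bm{x}$ exhibits $\bm{x}$ in the required form $\bm{x}=\bm{A}^{\ast}\bm{u}$, which closes the argument.

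I do not expect a serious obstacle here; the proof is a clean application of the Carr{\'e} inequality together with the idempotent-semiring identities $\bm{A}^{\ast}\bm{A}^{\ast}=\bm{A}^{\ast}$ and $\bm{A}\bm{A}^{\ast}\oplus\bm{I}=\bm{A}^{\ast}$ (the second of which is exactly what makes $\bm{u}=\bm{x}$ work). The only point requiring a little care is the bookkeeping of which powers of $\bm{A}$ appear: $\bm{A}^{\ast}$ is a sum up to $\bm{A}^{n-1}$ while $\bm{A}\bm{A}^{\ast}$ reaches $\bm{A}^{n}$, so one genuinely needs Carr{\'e} to absorb the top term $\bm{A}^{n}\bm{u}$ back into $\bm{A}^{\ast}\bm{u}$ in the forward direction. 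The role of the hypothesis $\mathop\mathrm{Tr}(\bm{A})\leq\mathbb{1}$ is precisely to license this absorption; without it the set of solutions of \eqref{I-Axx} would only be the zero vector.
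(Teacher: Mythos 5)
Your proof is correct and follows essentially the same route as the paper's: the forward direction via the Carr\'e inequality giving $\bm{A}\bm{A}^{\ast}=\bm{A}\oplus\cdots\oplus\bm{A}^{n}\leq\bm{A}^{\ast}$, and the converse by iterating $\bm{A}^{k}\bm{x}\leq\bm{x}$, combining with $\bm{A}^{\ast}\geq\bm{I}$ to get $\bm{A}^{\ast}\bm{x}=\bm{x}$, and taking $\bm{u}=\bm{x}$. Only your closing aside overreaches slightly --- without the trace condition the solution set need not collapse to the zero vector, and that claim plays no role in the argument anyway.
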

\begin{proof}
First, we show that, for any $\bm{u}\in\mathbb{X}^{n}$, the vector $\bm{x}=\bm{A}^{\ast}\bm{u}$ satisfies \eqref{I-Axx}. It follows from the Carr{\'e} inequality that $\bm{A}\bm{A}^{\ast}=\bm{A}\oplus\cdots\oplus\bm{A}^{n}\leq\bm{A}^{\ast}$. Then,
$$
\bm{A}\bm{x}
=
\bm{A}(\bm{A}^{\ast}\bm{u})
=
(\bm{A}\bm{A}^{\ast})\bm{u}
\leq
\bm{A}^{\ast}\bm{u}
=
\bm{x}.
$$

Now suppose that $\bm{x}$ is a solution to inequality \eqref{I-Axx}, and then verify that $\bm{x}=\bm{A}^{\ast}\bm{u}$ for some $\bm{u}\in\mathbb{X}^{n}$. Indeed, left multiplication of \eqref{I-Axx} by $\bm{A}$ yields the inequality $\bm{A}^{m}\bm{x}\leq\bm{x}$ for all integer $m\geq1$, and thus the inequality $\bm{A}^{\ast}\bm{x}\leq\bm{x}$. Because $\bm{A}^{\ast}\geq\bm{I}$, we also have the inequality $\bm{A}^{\ast}\bm{x}\geq\bm{x}$. Both inequalities result in the equality $\bm{x}=\bm{A}^{\ast}\bm{u}$, where $\bm{u}=\bm{x}$.
\end{proof}

Finally, we suppose that, given a matrix $\bm{A}\in\mathbb{X}^{n\times n}$ and a vector $\bm{b}\in\mathbb{X}^{n}$, we need to obtain all regular vectors $\bm{x}\in\mathbb{X}^{n}$ that satisfy the inequality
\begin{equation}
\bm{A}\bm{x}\oplus\bm{b}
\leq
\bm{x}.
\label{I-Axbx}
\end{equation}

The next complete direct solution to the problem is given in \cite{Krivulin2013Amultidimensional}.
\begin{theorem}\label{T-Axbx}
For any matrix $\bm{A}$ and vector $\bm{b}$, the following statements hold:
\begin{enumerate}
\item If $\mathop\mathrm{Tr}(\bm{A})\leq\mathbb{1}$, then all regular solutions to inequality \eqref{I-Axbx} are given by $\bm{x}=\bm{A}^{\ast}\bm{u}$, where $\bm{u}$ is any regular vector such that $\bm{u}\geq\bm{b}$.
\item If $\mathop\mathrm{Tr}(\bm{A})>\mathbb{1}$, then there is no regular solution.
\end{enumerate}
\end{theorem}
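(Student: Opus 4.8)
The plan is to follow the same ``each inequality is a consequence of the other'' pattern used for Lemma~\ref{L-IxdA} and Lemma~\ref{L-xAu}, combined with a separate argument showing that regularity forces $\mathop\mathrm{Tr}(\bm{A})\leq\mathbb{1}$. For the first statement, I would split the proof into sufficiency and necessity. For sufficiency, assume $\mathop\mathrm{Tr}(\bm{A})\leq\mathbb{1}$ and let $\bm{u}$ be any regular vector with $\bm{u}\geq\bm{b}$; set $\bm{x}=\bm{A}^{\ast}\bm{u}$. Then $\bm{x}$ is regular because $\bm{A}^{\ast}\geq\bm{I}$ gives $\bm{x}\geq\bm{u}$, and $\bm{u}$ is regular. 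By the Carré inequality $\bm{A}\bm{A}^{\ast}\leq\bm{A}^{\ast}$, so $\bm{A}\bm{x}=\bm{A}\bm{A}^{\ast}\bm{u}\leq\bm{A}^{\ast}\bm{u}=\bm{x}$; and $\bm{b}\leq\bm{u}\leq\bm{A}^{\ast}\bm{u}=\bm{x}$. Adding these two inequalities yields $\bm{A}\bm{x}\oplus\bm{b}\leq\bm{x}$.

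For necessity, suppose $\bm{x}$ is any regular solution to \eqref{I-Axbx}. From $\bm{A}\bm{x}\oplus\bm{b}\leq\bm{x}$ we get both $\bm{A}\bm{x}\leq\bm{x}$ and $\bm{b}\leq\bm{x}$. The inequality $\bm{A}\bm{x}\leq\bm{x}$ together with Lemma~\ref{L-xAu} (valid since we will have established $\mathop\mathrm{Tr}(\bm{A})\leq\mathbb{1}$) gives $\bm{x}=\bm{A}^{\ast}\bm{u}$ with $\bm{u}=\bm{x}$, which is regular and satisfies $\bm{u}=\bm{x}\geq\bm{b}$, as required. So the only genuinely new piece is the second statement, i.e.\ that no regular solution can exist when $\mathop\mathrm{Tr}(\bm{A})>\mathbb{1}$; equivalently, the existence of a regular solution to \eqref{I-Axbx} (indeed already to $\bm{A}\bm{x}\leq\bm{x}$) forces $\mathop\mathrm{Tr}(\bm{A})\leq\mathbb{1}$.

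For that converse I would argue as follows. If $\bm{x}$ is a regular solution to $\bm{A}\bm{x}\leq\bm{x}$, iterating gives $\bm{A}^{m}\bm{x}\leq\bm{x}$ for every $m\geq1$. Multiplying on the left by $\bm{x}^{-}$ and using $\bm{x}^{-}\bm{x}=\mathbb{1}$ yields $\bm{x}^{-}\bm{A}^{m}\bm{x}\leq\mathbb{1}$. The key step is to bound $\mathop\mathrm{tr}\bm{A}^{m}$ by $\bm{x}^{-}\bm{A}^{m}\bm{x}$: since $\bm{x}$ is regular, for the diagonal entries one has $(\bm{A}^{m})_{ii}=x_{i}^{-1}(\bm{A}^{m})_{ii}x_{i}\leq\bm{x}^{-}\bm{A}^{m}\bm{x}$ for each $i$, hence $\mathop\mathrm{tr}\bm{A}^{m}=\bigoplus_{i}(\bm{A}^{m})_{ii}\leq\bm{x}^{-}\bm{A}^{m}\bm{x}\leq\mathbb{1}$. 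Taking the sum over $m=1,\dots,n$ gives $\mathop\mathrm{Tr}(\bm{A})\leq\mathbb{1}$, which is the contrapositive of statement~2.

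I expect the main obstacle to be the trace–quadratic-form bound $\mathop\mathrm{tr}\bm{A}^{m}\leq\bm{x}^{-}\bm{A}^{m}\bm{x}$: it must be justified carefully from the monotonicity and extremal properties of the semifield operations (writing out $\bm{x}^{-}\bm{A}^{m}\bm{x}=\bigoplus_{i,j}x_{i}^{-1}(\bm{A}^{m})_{ij}x_{j}$ and picking out the $i=j$ terms), and one must make sure regularity of $\bm{x}$ is genuinely used so that the $x_{i}^{-1}$ are well defined. Everything else is a routine recombination of Lemma~\ref{L-xAu}, the Carré inequality, and the stated properties of conjugate transposition; in particular the appeal to Lemma~\ref{L-xAu} inside the necessity part of statement~1 is legitimate precisely because statement~2 (in contrapositive form) has already forced $\mathop\mathrm{Tr}(\bm{A})\leq\mathbb{1}$.
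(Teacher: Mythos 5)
Your proof is correct. Note that the paper itself does not prove Theorem~\ref{T-Axbx} at all --- it is stated with a citation to an external reference --- so there is no in-paper argument to compare against; but your argument is built entirely from the paper's own toolkit (the Carr\'e inequality, Lemma~\ref{L-xAu}, the extremal and monotonicity properties of $\oplus$, and $\bm{x}^{-}\bm{x}=\mathbb{1}$) and is in exactly the style of the proofs of Lemmas~\ref{L-xAu} and~\ref{L-minxAx}. The three pieces all check out: $\bm{x}=\bm{A}^{\ast}\bm{u}\geq\bm{u}$ is regular and satisfies both $\bm{A}\bm{x}\leq\bm{x}$ and $\bm{b}\leq\bm{x}$, hence their sum is $\leq\bm{x}$ by idempotency; conversely a regular solution satisfies $\bm{A}\bm{x}\leq\bm{x}$ and $\bm{b}\leq\bm{x}$, so Lemma~\ref{L-xAu} (directly applicable under the hypothesis $\mathop\mathrm{Tr}(\bm{A})\leq\mathbb{1}$ of statement~1 --- your remark that statement~2 is needed here is a harmless redundancy) gives $\bm{x}=\bm{A}^{\ast}\bm{x}$ with $\bm{u}=\bm{x}\geq\bm{b}$ regular; and the key bound $\mathop\mathrm{tr}\bm{A}^{m}=\bigoplus_{i}x_{i}^{-1}(\bm{A}^{m})_{ii}x_{i}\leq\bigoplus_{i,j}x_{i}^{-1}(\bm{A}^{m})_{ij}x_{j}=\bm{x}^{-}\bm{A}^{m}\bm{x}\leq\bm{x}^{-}\bm{x}=\mathbb{1}$ is exactly the right way to extract $\mathop\mathrm{Tr}(\bm{A})\leq\mathbb{1}$ from the existence of a regular solution, using regularity of $\bm{x}$ precisely where it is needed.
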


\section{Extremal properties of eigenvalues and optimization problems}
\label{S-epoeaop}

The spectral radius (maximum eigenvalue) of a matrix in idempotent algebra exhibits an extremal property in the sense that it represents the minimum of a functional defined by the matrix and calculated with a conjugate transposition operator.

To be more specific, let $\bm{A}\in\mathbb{X}^{n\times n}$ be a matrix with spectral radius $\lambda$ and $\bm{x}\in\mathbb{X}^{n}$ be a vector. We form the functional $\bm{x}^{-}\bm{A}\bm{x}$ and consider the problem
\begin{equation}
\begin{aligned}
&
\text{minimize}
&&
\bm{x}^{-}\bm{A}\bm{x},
\end{aligned}
\label{P-minxAx}
\end{equation}
where the minimum is taken over all regular vectors $\bm{x}$.

Then, it can be shown that the minimum in this problem is equal to $\lambda$.

Although this property has been known in various settings for a long time, the problem of finding all vectors $\bm{x}$ that yield the minimum did not admit a complete solution in terms of a general semifield. 

In the context of tropical mathematics, this property has been first investigated in \cite{Cuninghamegreen1962Describing} in the framework of the semifield $\mathbb{R}_{\max,+}$. It has been shown that the minimum in \eqref{P-minxAx} is attained at any eigenvector of the matrix $\bm{A}$, which corresponds to $\lambda$. Furthermore, the problem was reduced in \cite{Cuninghamegreen1979Minimax} to a linear programming problem, which, however, did not offer a direct representation of the solution vectors in terms of tropical mathematics. Similar results in a different and somewhat more general setting have been obtained in \cite{Engel1975Diagonal}.

More solution vectors for the problem were indicated in \cite{Elsner2004Maxalgebra,Elsner2010Maxalgebra}. Specifically, it has been shown that not only the eigenvectors, which solve the equation $\bm{A}\bm{x}=\lambda\bm{x}$, but all the vectors, which satisfy the inequality $\bm{A}\bm{x}\leq\lambda\bm{x}$, yield the minimum of the functional $\bm{x}^{-}\bm{A}\bm{x}$. However, the solution, which has been given to the last inequality, had the form of a numerical algorithm rather than a direct explicit form.

A complete direct solution to problem \eqref{P-minxAx} is obtained in \cite{Krivulin2012Atropical,Krivulin2013Amultidimensional} as a consequence of the solution to more general optimization problems. Below, we provide the solution to the problem under general assumptions and present a new independent proof as a good example of the approach.
\begin{lemma}\label{L-minxAx}
Let $\bm{A}$ be a matrix with spectral radius $\lambda>\mathbb{0}$. Then, the minimum in \eqref{P-minxAx} is equal to $\lambda$ and all regular solutions of the problem are given by
$$
\bm{x}
=
(\lambda^{-1}\bm{A})^{\ast}\bm{u},
\qquad
\bm{u}\in\mathbb{X}^{n}.
$$
\end{lemma}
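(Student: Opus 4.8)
The plan is to establish the lower bound on the objective first, then reduce the characterization of optimal vectors to an already-solved inequality. For the lower bound, I would start from the observation that for any regular vector $\bm{x}$ we can write $\bm{x}^{-}\bm{A}\bm{x}\geq\mu$ if and only if $\bm{A}\bm{x}\geq\mu\bm{x}$ is \emph{not} forced, but more usefully: the scalar $\theta=\bm{x}^{-}\bm{A}\bm{x}$ satisfies $\bm{A}\bm{x}\leq\theta\bm{x}$, which follows by left-multiplying by $\bm{x}\bm{x}^{-}$ and using $\bm{x}\bm{x}^{-}\geq\bm{I}$ together with $\bm{x}^{-}\bm{x}=\mathbb{1}$. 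Applying $\bm{A}$ repeatedly to $\bm{A}\bm{x}\leq\theta\bm{x}$ gives $\bm{A}^{m}\bm{x}\leq\theta^{m}\bm{x}$, hence $\theta^{-m}\bm{A}^{m}\bm{x}\leq\bm{x}$ for all $m\geq1$; taking traces (or using the spectral radius formula) yields $\theta^{-m}\mathop\mathrm{tr}\bm{A}^{m}\leq\mathbb{1}$ for $m=1,\dots,n$, and since $\lambda=\bigoplus_{m=1}^{n}\mathop\mathrm{tr}\nolimits^{1/m}(\bm{A}^{m})$ this gives $\theta\geq\lambda$. So every regular $\bm{x}$ has $\bm{x}^{-}\bm{A}\bm{x}\geq\lambda$, establishing the lower bound.

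Next I would show the bound is attained and identify the optimal set. The key reduction is that $\bm{x}^{-}\bm{A}\bm{x}\leq\lambda$ is equivalent to $\bm{x}^{-}\bm{A}\bm{x}=\lambda$ (by the lower bound just proved), and also equivalent to the inequality $\bm{A}\bm{x}\leq\lambda\bm{x}$. One direction of this equivalence is the argument above with $\theta$ replaced by $\lambda$; for the converse, if $\bm{A}\bm{x}\leq\lambda\bm{x}$ then left-multiplying by $\bm{x}^{-}$ and using $\bm{x}^{-}\bm{x}=\mathbb{1}$ gives $\bm{x}^{-}\bm{A}\bm{x}\leq\lambda$. Now $\bm{A}\bm{x}\leq\lambda\bm{x}$ rewrites, after dividing by $\lambda$ (legitimate since $\lambda>\mathbb{0}$), as $(\lambda^{-1}\bm{A})\bm{x}\leq\bm{x}$, which is exactly inequality \eqref{I-Axx} for the matrix $\lambda^{-1}\bm{A}$.

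To invoke Lemma~\ref{L-xAu} I need $\mathop\mathrm{Tr}(\lambda^{-1}\bm{A})\leq\mathbb{1}$. This is where the spectral radius formula pays off once more: $\mathop\mathrm{tr}(\lambda^{-1}\bm{A})^{m}=\lambda^{-m}\mathop\mathrm{tr}\bm{A}^{m}\leq\mathbb{1}$ for every $m=1,\dots,n$, because $\lambda^{m}\geq\mathop\mathrm{tr}\bm{A}^{m}$ (noted explicitly at the end of Section~\ref{S-pdar}). Summing over $m$ gives $\mathop\mathrm{Tr}(\lambda^{-1}\bm{A})\leq\mathbb{1}$. Lemma~\ref{L-xAu} then tells us that all solutions to $(\lambda^{-1}\bm{A})\bm{x}\leq\bm{x}$ are exactly $\bm{x}=(\lambda^{-1}\bm{A})^{\ast}\bm{u}$, $\bm{u}\in\mathbb{X}^{n}$. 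It remains only to note that such $\bm{x}$ is regular for suitable $\bm{u}$ (indeed the set is nonempty — any eigenvector corresponding to $\lambda$ works, giving equality $\bm{x}^{-}\bm{A}\bm{x}=\lambda$), so the minimum value $\lambda$ is genuinely achieved.

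The only real subtlety — the step I would watch carefully — is the interplay between the full solution set of $(\lambda^{-1}\bm{A})\bm{x}\leq\bm{x}$ as given by Lemma~\ref{L-xAu} and the restriction to \emph{regular} vectors demanded by problem \eqref{P-minxAx}; one should check that the family $\bm{x}=(\lambda^{-1}\bm{A})^{\ast}\bm{u}$ with $\bm{u}$ ranging over all of $\mathbb{X}^{n}$ indeed captures exactly the regular optimizers and that regular choices of $\bm{u}$ produce regular $\bm{x}$ (using that $(\lambda^{-1}\bm{A})^{\ast}\geq\bm{I}$). Everything else is a direct chain of the conjugate-transposition identities and the spectral radius formula already recorded in Section~\ref{S-pdar}.
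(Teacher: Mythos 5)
Your argument is correct, and the second half (reducing $\bm{x}^{-}\bm{A}\bm{x}\leq\lambda$ to $\lambda^{-1}\bm{A}\bm{x}\leq\bm{x}$ by multiplying with $\lambda^{-1}\bm{x}$ and $\lambda\bm{x}^{-}$, checking $\mathop\mathrm{Tr}(\lambda^{-1}\bm{A})\leq\mathbb{1}$ from $\lambda^{m}\geq\mathop\mathrm{tr}\bm{A}^{m}$, and invoking Lemma~\ref{L-xAu}) coincides with the paper's proof. Where you genuinely diverge is the lower bound. The paper first treats an irreducible $\bm{A}$ by pairing an arbitrary regular $\bm{x}$ against a regular eigenvector $\bm{x}_{0}$ via $\bm{x}^{-}\bm{A}\bm{x}\geq\bm{x}^{-}\bm{A}\bm{x}_{0}(\bm{x}^{-}\bm{x}_{0})^{-1}=\lambda$, and then handles reducible $\bm{A}$ by decomposing into the block-triangular normal form \eqref{E-MNF} and summing the diagonal-block bounds. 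You instead set $\theta=\bm{x}^{-}\bm{A}\bm{x}$, deduce $\bm{A}^{m}\bm{x}\leq\theta^{m}\bm{x}$, and read off $\mathop\mathrm{tr}\bm{A}^{m}\leq\theta^{m}$, whence $\theta\geq\lambda$ by the trace formula for the spectral radius. This is more uniform: it needs no case split on reducibility and no appeal to the existence or regularity of eigenvectors. The one step you should spell out is precisely ``taking traces'': from $\theta^{-m}\bm{A}^{m}\bm{x}\leq\bm{x}$ with $\bm{x}$ regular you must extract the diagonal, i.e.\ the $i$-th component inequality gives $(\theta^{-m}\bm{A}^{m})_{ii}\,x_{i}\leq x_{i}$ and hence $(\theta^{-m}\bm{A}^{m})_{ii}\leq\mathbb{1}$ by cancelling the invertible $x_{i}$; only then does $\theta^{-m}\mathop\mathrm{tr}\bm{A}^{m}\leq\mathbb{1}$ follow. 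Also, your parenthetical that attainment is witnessed by ``any eigenvector corresponding to $\lambda$'' is shaky for reducible $\bm{A}$, whose eigenvectors need not be regular; but your own alternative --- take $\bm{u}$ regular, so $\bm{x}=(\lambda^{-1}\bm{A})^{\ast}\bm{u}\geq\bm{u}$ is regular and satisfies $\bm{A}\bm{x}\leq\lambda\bm{x}$, hence $\bm{x}^{-}\bm{A}\bm{x}=\lambda$ --- already settles attainment correctly, so nothing essential is missing.
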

\begin{proof}
To verify the statement, we show that $\lambda$ is a lower bound for the objective function in \eqref{P-minxAx}, and then obtain all regular vectors that produce this bound. First, assume that the matrix $\bm{A}$ is irreducible and thus has only regular eigenvectors. We take an arbitrary eigenvector $\bm{x}_{0}$ and write
$$
\bm{x}^{-}\bm{A}\bm{x}
=
\bm{x}^{-}\bm{A}\bm{x}\bm{x}_{0}^{-}\bm{x}_{0}
\geq
\bm{x}^{-}\bm{A}\bm{x}_{0}(\bm{x}^{-}\bm{x}_{0})^{-1}
=
\lambda\bm{x}^{-}\bm{x}_{0}(\bm{x}^{-}\bm{x}_{0})^{-1}
=
\lambda.
$$

Suppose the matrix $\bm{A}$ is reducible and has the block-triangular form \eqref{E-MNF}. Let $\lambda_{i}$ be the eigenvalue of the matrix $\bm{A}_{ii}$, $i=1,\ldots,s$, and $\lambda=\lambda_{1}\oplus\cdots\oplus\lambda_{s}$. Furthermore, we represent the vector $\bm{x}$ in the block form $\bm{x}^{T}=(\bm{x}_{1}^{T},\ldots,\bm{x}_{s}^{T})$, where $\bm{x}_{i}$ is a vector of order $n_{i}$. It follows from the above result for irreducible matrices that
$$
\bm{x}^{-}\bm{A}\bm{x}
=
\bigoplus_{i=1}^{s}\bigoplus_{j=1}^{s}\bm{x}_{i}^{-}\bm{A}_{ij}\bm{x}_{j}
\geq
\bigoplus_{i=1}^{s}\bm{x}_{i}^{-}\bm{A}_{ii}\bm{x}_{i}
\geq
\bigoplus_{i=1}^{s}\lambda_{i}
=
\lambda.
$$

Consider the equation $\bm{x}^{-}\bm{A}\bm{x}=\lambda$. Since $\lambda$ is a lower bound, this equation can be replaced by the inequality $\bm{x}^{-}\bm{A}\bm{x}\leq\lambda$. Furthermore, the set of regular solutions of the inequality, if they exist, coincides with that of the inequality $\lambda^{-1}\bm{A}\bm{x}\leq\bm{x}$. Indeed, after the left multiplication of the former inequality by $\lambda^{-1}\bm{x}$, we have $\lambda^{-1}\bm{A}\bm{x}\leq\lambda^{-1}\bm{x}\bm{x}^{-}\bm{A}\bm{x}\leq\bm{x}$, which results in the latter. At the same time, left multiplication of the latter inequality by $\lambda\bm{x}^{-}$ directly leads to the former one.

With the condition $\mathop\mathrm{Tr}(\lambda^{-1}\bm{A})=\lambda^{-1}\mathop\mathrm{tr}\bm{A}\oplus\cdots\oplus\lambda^{-n}\mathop\mathrm{tr}\bm{A}^{n}\leq\mathbb{1}$, we apply Lemma~\ref{L-xAu} to the last inequality, which yields the desired complete solution to problem \eqref{P-minxAx}.
\end{proof}

To conclude this section, we provide examples of solutions to more general optimization problems examined in \cite{Krivulin2012Acomplete,Krivulin2014Aconstrained}. We start with an unconstrained problem, which has an extended objective function and formulated as follows. Given a matrix $\bm{A}\in\mathbb{X}^{n\times n}$ and vectors $\bm{p},\bm{q}\in\mathbb{X}^{n}$, find all regular vectors $\bm{x}\in\mathbb{X}^{n}$ such that
\begin{equation}
\begin{aligned}
&
\text{minimize}
&&
\bm{x}^{-}\bm{A}\bm{x}\oplus\bm{x}^{-}\bm{p}\oplus\bm{q}^{-}\bm{x}.
\end{aligned}
\label{P-xAxxpqx}
\end{equation}

A direct solution to the problem was proposed in \cite{Krivulin2012Acomplete}. For an irreducible matrix $\bm{A}$ with spectral radius $\lambda>\mathbb{0}$ and a regular vector $\bm{q}$, it was suggested that the minimum value in \eqref{P-xAxxpqx} is equal to
$$
\mu
=
\lambda\oplus(\bm{q}^{-}\bm{p})^{1/2},
$$
and all regular solutions of the problem are given by
$$
\bm{x}
=
(\mu^{-1}\bm{A})^{\ast}\bm{u},
\qquad
\mu^{-1}\bm{p}
\leq
\bm{u}
\leq
\mu(\bm{q}^{-}(\mu^{-1}\bm{A})^{\ast})^{-}.
$$

Below, we offer a solution to a more general problem, which includes problem \eqref{P-xAxxpqx} as a special case. The results obtained to problem \eqref{P-xAxxpqx} in the next section show that the above solution appears to be incomplete.

Finally, consider the following constrained version of problem \eqref{P-minxAx}. Suppose that, given matrices $\bm{A},\bm{B}\in\mathbb{X}^{n\times n}$, $\bm{C}\in\mathbb{X}^{m\times n}$ and vectors $\bm{g}\in\mathbb{X}^{n}$, $\bm{h}\in\mathbb{X}^{m}$, we need to find all regular vectors $\bm{x}\in\mathbb{X}^{n}$ such that
\begin{equation}
\begin{aligned}
&
\text{minimize}
&&
\bm{x}^{-}\bm{A}\bm{x},
\\
&
\text{subject to}
&&
\bm{B}\bm{x}\oplus\bm{g}
\leq
\bm{x},
\\
&
&&
\bm{C}\bm{x}
\leq
\bm{h}.
\end{aligned}
\label{P-xAxBxgCxh}
\end{equation}

Let $\bm{A}$ be a matrix with spectral radius $\lambda>\mathbb{0}$, $\bm{B}$ be a matrix with $\mathop\mathrm{Tr}(\bm{B})\leq\mathbb{1}$, $\bm{C}$ be a column-regular matrix, and $\bm{h}$ be a regular vector such that $\bm{h}^{-}\bm{C}\bm{B}^{\ast}\bm{g}\leq\mathbb{1}$. Then, problem \eqref{P-xAxBxgCxh} can be solved as in \cite{Krivulin2014Aconstrained} to find that the minimum value in problem \eqref{P-xAxBxgCxh} is equal to
$$
\theta
=
\bigoplus_{k=1}^{n}\mathop{\bigoplus\hspace{0.0em}}_{0\leq i_{0}+i_{1}+\cdots+i_{k}\leq n-k}\mathop\mathrm{tr}\nolimits^{1/k}(\bm{B}^{i_{0}}(\bm{A}\bm{B}^{i_{1}}\cdots\bm{A}\bm{B}^{i_{k}})(\bm{I}\oplus\bm{g}\bm{h}^{-}\bm{C})),
$$
and all regular solutions of the problem are given by
$$
\bm{x}
=
(\theta^{-1}\bm{A}\oplus\bm{B})^{\ast}\bm{u},
\qquad
\bm{g}
\leq
\bm{u}
\leq
(\bm{h}^{-}\bm{C}(\theta^{-1}\bm{A}\oplus\bm{B})^{\ast})^{-}.
$$

\section{New tropical optimization problems}
\label{S-ntop}

In this section we offer complete direct solutions for two new problems with objective functions that involve the functional $\bm{x}^{-}\bm{A}\bm{x}$. The proposed proofs of the results demonstrate the main ideas and techniques of the approach introduced in \cite{Krivulin2004Onsolution,Krivulin2005Evaluation,Krivulin2009Onsolution,Krivulin2009Methods} and further developed in \cite{Krivulin2011Anextremal,Krivulin2012Acomplete,Krivulin2012Anew,Krivulin2012Atropical,Krivulin2012Solution,Krivulin2013Amultidimensional,Krivulin2014Aconstrained}, which is based on the solutions to linear inequalities and the extremal property of the spectral radius discussed above.

\subsection{Unconstrained problem}

We start with an unconstrained problem, which is a further extension of the problems considered in the short conference papers \cite{Krivulin2012Solution,Krivulin2012Acomplete}, where less general objective functions were examined. The solution presented below expands the technique proposed in these papers to solve the new problem, but suggests a more compact representation of the results, which is based on the solution to the linear inequalities with arbitrary matrices in \cite{Krivulin2013Amultidimensional}. As a consequence, a more accurate solution to the problem in \cite{Krivulin2012Acomplete} is obtained.

Given a matrix $\bm{A}\in\mathbb{X}^{n\times n}$, vectors $\bm{p},\bm{q}\in\mathbb{X}^{n}$, and a scalar $c\in\mathbb{X}$, the problem is to find all regular vectors $\bm{x}\in\mathbb{X}^{n}$ such that
\begin{equation}
\begin{aligned}
&
\text{minimize}
&&
\bm{x}^{-}\bm{A}\bm{x}\oplus\bm{x}^{-}\bm{p}\oplus\bm{q}^{-}\bm{x}\oplus c.
\end{aligned}
\label{P-xAxxpqxc}
\end{equation}

\subsubsection{Solution to unconstrained problem}

A complete direct solution to the problem under fairly general conditions is given in a compact vector form by the following result.

\begin{theorem}\label{T-xAxxpqxc}
Let $\bm{A}$ be a matrix with spectral radius $\lambda>\mathbb{0}$, and $\bm{q}$ be a regular vector. Denote
\begin{equation}
\mu
=
\lambda
\oplus
\bigoplus_{m=1}^{n}
(\bm{q}^{-}\bm{A}^{m-1}\bm{p})^{1/(m+1)}
\oplus
c.
\label{E-mulambdaqApc}
\end{equation}

Then, the minimum in \eqref{P-xAxxpqxc} is equal to $\mu$ and all regular solutions of the problem are given by
\begin{equation*}
\bm{x}
=
(\mu^{-1}\bm{A})^{\ast}\bm{u},
\qquad
\mu^{-1}\bm{p}
\leq
\bm{u}
\leq
\mu(\bm{q}^{-}(\mu^{-1}\bm{A})^{\ast})^{-}.
\end{equation*}
\end{theorem}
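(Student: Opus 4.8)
The plan is to follow the same two-part strategy used for Lemma~\ref{L-minxAx}: first establish that $\mu$ as defined in \eqref{E-mulambdaqApc} is a lower bound for the objective function over all regular $\bm{x}$, and then show that the set of regular vectors achieving this bound is precisely the set described by the Kleene-star formula with the stated box constraints on $\bm{u}$. For the lower bound, I would argue that each of the three ``interesting'' summands of $\bm{x}^{-}\bm{A}\bm{x}\oplus\bm{x}^{-}\bm{p}\oplus\bm{q}^{-}\bm{x}$ forces part of $\mu$. The term $\bm{x}^{-}\bm{A}\bm{x}$ already gives $\bm{x}^{-}\bm{A}\bm{x}\geq\lambda$ by Lemma~\ref{L-minxAx} (or by the irreducible/reducible argument reproduced there). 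The constant $c$ is trivially a lower bound on the whole expression. The crux is the mixed bound $(\bm{q}^{-}\bm{A}^{m-1}\bm{p})^{1/(m+1)}$: I would obtain it by chaining the inequality $\bm{x}^{-}\bm{A}\bm{x}\geq\lambda$ together with telescoping products of the form $(\bm{x}^{-}\bm{A}^{k}\bm{x})(\bm{x}^{-}\bm{A}^{j}\bm{x})\geq\bm{x}^{-}\bm{A}^{k+j}\bm{x}$ (which follows from $\bm{x}\bm{x}^{-}\geq\bm{I}$) and the endpoint estimates $\bm{x}^{-}\bm{p}\geq(\bm{x}^{-}\bm{A}^{m-1}\bm{p})/(\bm{x}^{-}\bm{A}^{m-1}\bm{x})$ and $\bm{q}^{-}\bm{x}\geq(\bm{q}^{-}\bm{A}^{m-1}\bm{x})^{-1}\cdot(\text{something})$ — more precisely, for each $m$ one writes $\bm{q}^{-}\bm{A}^{m-1}\bm{p}\leq(\bm{q}^{-}\bm{x})(\bm{x}^{-}\bm{A})(\bm{x}^{-}\bm{A})\cdots(\bm{x}^{-}\bm{A}\bm{x})\cdots(\bm{x}^{-}\bm{p})$ inserting $\bm{x}\bm{x}^{-}\geq\bm{I}$ between the $m-1$ copies of $\bm{A}$, so that the right side is a product of $m+1$ terms each $\leq$ the objective value; taking $(m+1)$-st roots yields $(\bm{q}^{-}\bm{A}^{m-1}\bm{p})^{1/(m+1)}\leq\bm{x}^{-}\bm{A}\bm{x}\oplus\bm{x}^{-}\bm{p}\oplus\bm{q}^{-}\bm{x}$.

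Next I would show the bound is attained and characterize all minimizers. Replacing the equation ``objective $=\mu$'' by the inequality ``objective $\leq\mu$'', I would split it into the three simultaneous inequalities $\bm{x}^{-}\bm{A}\bm{x}\leq\mu$, $\bm{x}^{-}\bm{p}\leq\mu$, and $\bm{q}^{-}\bm{x}\leq\mu$ (the $c\leq\mu$ part being automatic). As in the proof of Lemma~\ref{L-minxAx}, the first is equivalent for regular $\bm{x}$ to $\mu^{-1}\bm{A}\bm{x}\leq\bm{x}$; by Lemma~\ref{L-xAu}, using that $\mathop\mathrm{Tr}(\mu^{-1}\bm{A})\leq\mathbb{1}$ since $\mu\geq\lambda$, this is equivalent to $\bm{x}=(\mu^{-1}\bm{A})^{\ast}\bm{u}$ for some $\bm{u}\in\mathbb{X}^{n}$. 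Substituting this representation, the second inequality $\bm{x}^{-}\bm{p}\leq\mu$ becomes a lower bound on $\bm{u}$: using Lemma~\ref{L-IxdA}-type manipulations and the identity $\bm{x}^{-}\bm{p}=((\mu^{-1}\bm{A})^{\ast}\bm{u})^{-}\bm{p}$, one needs $\bm{u}\geq\mu^{-1}\bm{p}$ (here one uses that $(\mu^{-1}\bm{A})^{\ast}\geq\bm{I}$ so $\bm{u}$ is ``visible'' in $\bm{x}$). The third inequality $\bm{q}^{-}\bm{x}=\bm{q}^{-}(\mu^{-1}\bm{A})^{\ast}\bm{u}\leq\mu$ is exactly an instance of \eqref{I-Axd} with row matrix $\bm{q}^{-}(\mu^{-1}\bm{A})^{\ast}$ and right-hand side $\mu$, so Lemma~\ref{L-IxdA} gives $\bm{u}\leq\mu(\bm{q}^{-}(\mu^{-1}\bm{A})^{\ast})^{-}$. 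Conversely, any $\bm{u}$ in the stated box, plugged into $\bm{x}=(\mu^{-1}\bm{A})^{\ast}\bm{u}$, satisfies all three inequalities, hence makes the objective $\leq\mu$; combined with the lower bound this forces equality $\mu$ and confirms both the optimal value and the solution set. It remains to check the box is nonempty, i.e.\ $\mu^{-1}\bm{p}\leq\mu(\bm{q}^{-}(\mu^{-1}\bm{A})^{\ast})^{-}$, equivalently $\bm{q}^{-}(\mu^{-1}\bm{A})^{\ast}\bm{p}\leq\mu^{2}$, which unpacks term-by-term into $\bm{q}^{-}\mu^{-(m-1)}\bm{A}^{m-1}\bm{p}\leq\mu^{2}$ for $m=1,\ldots,n$ — precisely the inequalities built into the definition \eqref{E-mulambdaqApc} of $\mu$ (since $\mu\geq(\bm{q}^{-}\bm{A}^{m-1}\bm{p})^{1/(m+1)}$ gives $\mu^{m+1}\geq\bm{q}^{-}\bm{A}^{m-1}\bm{p}$).

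The main obstacle I anticipate is the mixed lower-bound estimate for $(\bm{q}^{-}\bm{A}^{m-1}\bm{p})^{1/(m+1)}$: getting the exponent $1/(m+1)$ exactly right requires carefully counting that inserting $\bm{x}\bm{x}^{-}\geq\bm{I}$ between consecutive factors of $\bm{A}$ in $\bm{q}^{-}\bm{A}^{m-1}\bm{p}$ produces exactly $m-1$ copies of $\bm{x}^{-}\bm{A}\bm{x}$ plus the two ``cap'' factors $\bm{q}^{-}\bm{x}$ and $\bm{x}^{-}\bm{p}$, totalling $m+1$ factors each bounded by the objective value. A secondary subtlety is justifying that $\bm{u}$ can be recovered from $\bm{x}=(\mu^{-1}\bm{A})^{\ast}\bm{u}$ well enough that the constraint $\bm{x}^{-}\bm{p}\leq\mu$ translates cleanly to $\bm{u}\geq\mu^{-1}\bm{p}$ rather than to something weaker; this uses $(\mu^{-1}\bm{A})^{\ast}\geq\bm{I}$ together with the regularity of $\bm{q}$ (which guarantees the upper box endpoint is a genuine regular vector, so the whole derivation stays within regular vectors). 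Everything else is a routine application of Lemmas~\ref{L-IxdA} and~\ref{L-xAu} and the conjugation identities $\bm{x}\bm{x}^{-}\geq\bm{I}$, $\bm{x}^{-}\bm{x}=\mathbb{1}$.
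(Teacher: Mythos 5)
Your proposal is correct and its second half (reduce ``objective $\leq\mu$'' to the system $\bm{A}\bm{x}\leq\mu\bm{x}$, $\bm{p}\leq\mu\bm{x}$, $\bm{x}\leq\mu\bm{q}$, solve the first two via the Kleene star with $\bm{u}\geq\mu^{-1}\bm{p}$, push the third through Lemma~\ref{L-IxdA} to get the upper box endpoint, and read off the nonemptiness condition $\mu^{m+1}\geq\bm{q}^{-}\bm{A}^{m-1}\bm{p}$) is essentially the paper's proof. Where you genuinely diverge is in how the terms $(\bm{q}^{-}\bm{A}^{m-1}\bm{p})^{1/(m+1)}$ enter the optimal value. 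The paper never bounds the objective function from below by these quantities directly: it treats $\mu$ as a parameter, solves the whole system for $\bm{x}$, and extracts these terms purely as the consistency (nonemptiness) conditions $\mu^{-2}\bm{q}^{-}(\mu^{-1}\bm{A})^{\ast}\bm{p}\leq\mathbb{1}$ for the feasible box of $\bm{u}$; minimality of $\mu$ then forces equality in \eqref{E-mulambdaqApc}. You instead prove the lower bound directly on the objective by inserting $\bm{x}\bm{x}^{-}\geq\bm{I}$ between the $m-1$ factors of $\bm{A}$ in $\bm{q}^{-}\bm{A}^{m-1}\bm{p}$, giving $\bm{q}^{-}\bm{A}^{m-1}\bm{p}\leq(\bm{q}^{-}\bm{x})(\bm{x}^{-}\bm{A}\bm{x})^{m-1}(\bm{x}^{-}\bm{p})$, a product of $m+1$ factors each dominated by the objective value; your factor count is right and the argument is sound in a radicable linearly ordered semifield. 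This direct route is more illuminating (it explains \emph{why} the exponent is $1/(m+1)$ and even yields the bound for all $m\geq1$, not just $m\leq n$), at the cost of being logically redundant once the consistency analysis is done; the paper's route is more economical because the same computation that determines the solution set also determines $\mu$. One small point of rigor: your statement that $\bm{x}^{-}\bm{p}\leq\mu$ ``becomes $\bm{u}\geq\mu^{-1}\bm{p}$'' is not true representation-by-representation, since $\bm{u}\mapsto(\mu^{-1}\bm{A})^{\ast}\bm{u}$ is not injective; it is true as an equality of solution \emph{sets} (forward direction by $(\mu^{-1}\bm{A})^{\ast}\geq\bm{I}$, backward by taking $\bm{u}=\bm{x}$), which is exactly what Theorem~\ref{T-Axbx} packages if you apply it to the combined inequality $\mu^{-1}\bm{A}\bm{x}\oplus\mu^{-1}\bm{p}\leq\bm{x}$, as the paper does. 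You flag this subtlety yourself, so it is a presentational rather than a mathematical gap.
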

\begin{proof}
Below, we replace problem \eqref{P-xAxxpqxc} with an equivalent constrained problem that is to minimize a scalar $\mu$ subject to the constraint
\begin{equation}
\bm{x}^{-}\bm{A}\bm{x}\oplus\bm{x}^{-}\bm{p}\oplus\bm{q}^{-}\bm{x}\oplus c
\leq
\mu,
\label{I-xAxxpqxc}
\end{equation}
where the minimum is taken over all regular vectors $\bm{x}$.

Our use of inequality \eqref{I-xAxxpqxc} is twofold: first, we consider $\mu$ as a parameter and solve the inequality with respect to $\bm{x}$, and second, we exploit \eqref{I-xAxxpqxc} to derive a sharp lower bound on $\mu$. 

First, we note that inequality \eqref{I-xAxxpqxc} is itself equivalent to the system of inequalities
$$
\bm{x}^{-}\bm{A}\bm{x}
\leq
\mu,
\qquad
\bm{x}^{-}\bm{p}
\leq
\mu,
\qquad
\bm{q}^{-}\bm{x}
\leq
\mu,
\qquad
c
\leq
\mu.
$$

It follows from the first inequality and Lemma~\ref{L-minxAx} that $\mu\geq\bm{x}^{-}\bm{A}\bm{x}\geq\lambda>\mathbb{0}$.

Considering that the vectors $\bm{x}$ and $\bm{q}$ are regular, we apply Lemma~\ref{L-IxdA} to the first three inequalities to bring the system into the form
$$
\bm{A}\bm{x}
\leq
\mu\bm{x},
\qquad
\bm{p}
\leq
\mu\bm{x},
\qquad
\bm{x}
\leq
\mu\bm{q},
\qquad
c
\leq
\mu.
$$

The second and third inequalities together give $\bm{p}\leq\mu^{2}\bm{q}$. After multiplication by $\bm{q}^{-}$ from the left, we obtain another bound $\mu\geq(\bm{q}^{-}\bm{p})^{1/2}$.

Together with the forth inequality, we have
\begin{equation}
\mu
\geq
\lambda\oplus(\bm{q}^{-}\bm{p})^{1/2}\oplus c.
\label{I-muqpc}
\end{equation}

Furthermore, we obtain all regular vectors $\bm{x}$ that satisfy the first three inequalities of the system. We multiply the first two inequalities by $\mu^{-1}$ and then combine them into one to write the system of two inequalities
\begin{equation}
\mu^{-1}\bm{A}\bm{x}
\oplus
\mu^{-1}\bm{p}
\leq
\bm{x},
\qquad
\bm{x}
\leq
\mu\bm{q}.
\label{I-muAxmupx_xmuq}
\end{equation}

Note that $\mathop\mathrm{Tr}(\mu^{-1}\bm{A})\leq\mathop\mathrm{Tr}(\lambda^{-1}\bm{A})\leq\mathbb{1}$ since $\mu\geq\lambda$. Then, Theorem~\ref{T-Axbx} provides a complete solution to the first inequality at \eqref{I-muAxmupx_xmuq} in the form
$$
\bm{x}
=
(\mu^{-1}\bm{A})^{\ast}\bm{u},
\qquad
\bm{u}
\geq
\mu^{-1}\bm{p}.
$$

Substitution of the solution into the second inequality results in the system of inequalities defined in terms of the new variable $\bm{u}$ as
$$
(\mu^{-1}\bm{A})^{\ast}\bm{u}
\leq
\mu\bm{q},
\qquad
\bm{u}
\geq
\mu^{-1}\bm{p}.
$$

By solving the first inequality in the new system by means of Lemma~\ref{L-IxdA}, we arrive at the double inequality
$$
\mu^{-1}\bm{p}
\leq
\bm{u}
\leq
\mu(\bm{q}^{-}(\mu^{-1}\bm{A})^{\ast})^{-}.
$$

The set of vectors $\bm{u}$ defined by this inequality is not empty if and only if
$$
\mu^{-1}\bm{p}
\leq
\mu(\bm{q}^{-}(\mu^{-1}\bm{A})^{\ast})^{-}.
$$

By multiplying this inequality by $\mu^{-1}\bm{q}^{-}(\mu^{-1}\bm{A})^{\ast}$ from the left, we obtain
$$
\mu^{-2}\bm{q}^{-}(\mu^{-1}\bm{A})^{\ast}\bm{p}
\leq
\mathbb{1}.
$$

Since left multiplication of the latter inequality by $\mu(\bm{q}^{-}(\mu^{-1}\bm{A})^{\ast})^{-}$ gives $\mu^{-1}\bm{p}\leq\mu^{-1}(\bm{q}^{-}(\mu^{-1}\bm{A})^{\ast})^{-}\bm{q}^{-}(\mu^{-1}\bm{A})^{\ast}\bm{p}\leq\mu(\bm{q}^{-}(\mu^{-1}\bm{A})^{\ast})^{-}$, and so yields the former inequality, both inequalities are equivalent.

Consider the left-hand side of the last inequality and write it in the form
$$
\mu^{-2}\bm{q}^{-}\left(\bigoplus_{m=0}^{n-1}\mu^{-m}\bm{A}^{m}\right)\bm{p}
=
\bigoplus_{m=0}^{n-1}\mu^{-m-2}\bm{q}^{-}\bm{A}^{m}\bm{p}.
$$

Then, the inequality is equivalent to the system of inequalities
$$
\mu^{-m-2}\bm{q}^{-}\bm{A}^{m}\bm{p}
\leq
\mathbb{1},
\qquad
m=0,\ldots,n-1,
$$
which can further be rewritten as
$$
\mu^{m+1}
\geq
\bm{q}^{-}\bm{A}^{m-1}\bm{p},
\qquad
m=1,\ldots,n.
$$

Furthermore, we solve every inequality in the system to obtain
$$
\mu
\geq
(\bm{q}^{-}\bm{A}^{m-1}\bm{p})^{1/(m+1)},
\qquad
m=1,\ldots,n.
$$

By combining these solutions with the bounds at \eqref{I-muqpc}, we have
$$
\mu
\geq
\lambda
\oplus
\bigoplus_{m=1}^{n}
(\bm{q}^{-}\bm{A}^{m-1}\bm{p})^{1/(m+1)}
\oplus
c.
$$

As we need to find the minimum value of $\mu$, the last inequality must hold as equality \eqref{E-mulambdaqApc}. The solution set is then given by
$$
\bm{x}
=
(\mu^{-1}\bm{A})^{\ast}\bm{u},
\qquad
\mu^{-1}\bm{p}
\leq
\bm{u}
\leq
\mu(\bm{q}^{-}(\mu^{-1}\bm{A})^{\ast})^{-},
$$
which completes the proof.
\end{proof}

Note that a complete solution to problem \eqref{P-xAxxpqx} can now be obtained as a direct consequence of the above result when $c=\mathbb{0}$.
\begin{corollary}
Let $\bm{A}$ be a matrix with spectral radius $\lambda>\mathbb{0}$, and $\bm{q}$ be a regular vector. Denote
$$
\mu
=
\lambda
\oplus
\bigoplus_{m=1}^{n}
(\bm{q}^{-}\bm{A}^{m-1}\bm{p})^{1/(m+1)}.
$$

Then, the minimum in \eqref{P-xAxxpqx} is equal to $\mu$ and all regular solutions of the problem are given by
$$
\bm{x}
=
(\mu^{-1}\bm{A})^{\ast}\bm{u},
\qquad
\mu^{-1}\bm{p}
\leq
\bm{u}
\leq
\mu(\bm{q}^{-}(\mu^{-1}\bm{A})^{\ast})^{-}.
$$
\end{corollary}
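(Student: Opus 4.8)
\emph{Proof proposal.} The plan is to obtain the Corollary as an immediate specialization of Theorem~\ref{T-xAxxpqxc} to the choice $c=\mathbb{0}$. First I would record the elementary observation that $\mathbb{0}$ is the neutral element of idempotent addition, so $y\oplus\mathbb{0}=y$ for every $y\in\mathbb{X}$. Consequently, when $c=\mathbb{0}$ the objective function of problem \eqref{P-xAxxpqxc}, namely $\bm{x}^{-}\bm{A}\bm{x}\oplus\bm{x}^{-}\bm{p}\oplus\bm{q}^{-}\bm{x}\oplus\mathbb{0}$, is identically equal to the objective $\bm{x}^{-}\bm{A}\bm{x}\oplus\bm{x}^{-}\bm{p}\oplus\bm{q}^{-}\bm{x}$ of problem \eqref{P-xAxxpqx}. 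Since both problems are minimized over the same feasible set of all regular vectors $\bm{x}$, they have the same minimum value and the same set of regular solutions.

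Next I would verify that the hypotheses of Theorem~\ref{T-xAxxpqxc} hold under those of the Corollary: the matrix $\bm{A}$ has spectral radius $\lambda>\mathbb{0}$, the vector $\bm{q}$ is regular, and $c=\mathbb{0}$ is admissible because the theorem is stated for an arbitrary scalar $c\in\mathbb{X}$. Applying the theorem then gives the minimum value $\mu$ defined by \eqref{E-mulambdaqApc}; substituting $c=\mathbb{0}$ and again using $y\oplus\mathbb{0}=y$ reduces this to $\mu=\lambda\oplus\bigoplus_{m=1}^{n}(\bm{q}^{-}\bm{A}^{m-1}\bm{p})^{1/(m+1)}$, which is exactly the expression in the Corollary. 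The description of all regular solutions, $\bm{x}=(\mu^{-1}\bm{A})^{\ast}\bm{u}$ with $\mu^{-1}\bm{p}\leq\bm{u}\leq\mu(\bm{q}^{-}(\mu^{-1}\bm{A})^{\ast})^{-}$, carries over verbatim from the theorem.

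There is essentially no obstacle: the statement is a pure corollary, and the only point worth making explicit is that adjoining the summand $c=\mathbb{0}$ alters neither the objective nor the value of $\mu$. If a self-contained argument were preferred, one could instead re-run the proof of Theorem~\ref{T-xAxxpqxc} with $c=\mathbb{0}$ throughout; this simply drops the trivial term $c\leq\mu$ from the system and the term $c$ from the final bound, reproducing the same chain of estimates, so directly invoking the theorem is the cleanest route.
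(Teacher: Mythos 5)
Your proposal is correct and matches the paper exactly: the paper obtains this corollary as a direct consequence of Theorem~\ref{T-xAxxpqxc} by setting $c=\mathbb{0}$, which is precisely your argument. The explicit remark that $y\oplus\mathbb{0}=y$ makes both the objective and the expression for $\mu$ reduce as claimed is the only point needed, and you have it.
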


\subsubsection{Application to project scheduling}

In this section, we provide an application example for the result obtained above to solve a problem, which is drawn from project scheduling \cite{Demeulemeester2002Project,Tkindt2006Multicriteria}. 

Suppose that a project involves a set of activities (jobs, tasks) to be performed in parallel according to precedence relationships given in the form of start-finish, late start and early finish temporal constraints. The start-finish constraints do not enable an activity to be completed until specified times have elapsed after the initiation of other activities. The activities are completed as soon as possible within these constraints.

The late start and the early finish constraints determine, respectively, the lower and upper boundaries of time windows that are allocated to the activities in the project. Each activity has to occupy its time window entirely. If the initiation time of the activity falls to the right of the lower bound of the window, the time is adjusted by shifting to this bound. In a similar way, the completion time is shifted to the upper bound of the window if this time appears to the left of the bound. 

For each activity in the project, the flow (turnaround, processing) time is defined as the time interval between the adjusted initiation and completion times of the activity. The optimal scheduling problem is to find the initiation times that minimize the maximum flow time over all activities, subject to the precedence constraints described above.

Consider a project with $n$ activities. For each activity $i=1,\ldots,n$, we denote the initiation time by $x_{i}$ and the completion time by $y_{i}$. Let $a_{ij}$ be the minimum possible time lag between the initiation of activity $j=1,\ldots,n$ and the completion of $i$. If the time lag is not specified for some $j$, we assume that $a_{ij}=-\infty$. The start-finish constraints lead to the equalities
$$
y_{i}
=
\max(a_{i1}+x_{1},\ldots,a_{in}+x_{n}),
\qquad
i=1,\ldots,n.
$$

Furthermore, we denote the late start and early finish times for activity $i$ by $q_{i}$ and $p_{i}$, respectively. Let $s_{i}$ be the adjusted initiation time and $t_{i}$ be the adjusted completion time of activity $i$. Considering the minimum time window defined by the late start and the early finish constraints, we have 
$$
s_{i}
=
\min(x_{i},q_{i})
=
-
\max(-x_{i},-q_{i}),
\quad
t_{i}
=
\max(y_{i},p_{i}),
\qquad
i=1,\ldots,n.
$$

Finally, with the maximum flow time, which is given by
$$
\max(t_{1}-s_{1},\ldots,t_{n}-s_{n}),
$$
we arrive at the optimal scheduling problem in the form
\begin{equation*}
\begin{aligned}
&
\text{minimize}
&&
\max_{1\leq i\leq n}(t_{i}-s_{i}),
\\
&
\text{subject to}
&&
s_{i}
=
-
\max(-x_{i},-q_{i}),
\\
&&&
t_{i}
=
\max\left(\max_{1\leq j\leq n}(a_{ij}+x_{j}),p_{i}\right),
\qquad
i=1,\ldots,n.
\end{aligned}
\end{equation*}

As the problem formulation involves only the operations of maximum, ordinary addition, and additive inversion, we can represent the problem in terms of the semifield $\mathbb{R}_{\max,+}$. First, we introduce the matrix-vector notation
$$
\bm{A}
=
(a_{ij}),
\quad
\bm{p}
=
(p_{i}),
\quad
\bm{q}
=
(q_{i}),
\quad
\bm{x}
=
(x_{i}),
\quad
\bm{s}
=
(s_{i}),
\quad
\bm{t}
=
(t_{i}).
$$

By using matrix algebra over $\mathbb{R}_{\max,+}$, we write
$$
\bm{s}
=
(\bm{x}^{-}\oplus\bm{q}^{-})^{-},
\qquad
\bm{t}
=
\bm{A}\bm{x}\oplus\bm{p}.
$$

Then, the objective function takes the form
$$
\bm{s}^{-}\bm{t}
=
(\bm{x}^{-}\oplus\bm{q}^{-})(\bm{A}\bm{x}\oplus\bm{p})
=
\bm{x}^{-}\bm{A}\bm{x}
\oplus
\bm{q}^{-}\bm{A}\bm{x}
\oplus
\bm{x}^{-}\bm{p}
\oplus
\bm{q}^{-}\bm{p}.
$$

The problem under study now reduces to the unconstrained problem
\begin{equation}
\begin{aligned}
&
\text{minimize}
&&
\bm{x}^{-}\bm{A}\bm{x}
\oplus
\bm{q}^{-}\bm{A}\bm{x}
\oplus
\bm{x}^{-}\bm{p}
\oplus
\bm{q}^{-}\bm{p},
\end{aligned}
\label{P-xAxqxxpqp}
\end{equation}
which has the form of \eqref{P-xAxxpqxc}.

The application of Theorem~\ref{T-xAxxpqxc} with $\bm{q}^{-}$ replaced by $\bm{q}^{-}\bm{A}$ and $c$ by $\bm{q}^{-}\bm{p}$ gives the following result.
\begin{theorem}
Let $\bm{A}$ be a matrix with spectral radius $\lambda>\mathbb{0}$, and $\bm{q}$ be a regular vector. Denote
\begin{equation}
\mu
=
\lambda
\oplus
\bigoplus_{m=0}^{n}
(\bm{q}^{-}\bm{A}^{m}\bm{p})^{1/(m+1)}.
\label{E-mulambdaqAp}
\end{equation}

Then, the minimum in \eqref{P-xAxqxxpqp} is equal to $\mu$ and all regular solutions of the problem are given by
\begin{equation*}
\bm{x}
=
(\mu^{-1}\bm{A})^{\ast}\bm{u},
\qquad
\mu^{-1}\bm{p}
\leq
\bm{u}
\leq
\mu(\bm{q}^{-}\bm{A}(\mu^{-1}\bm{A})^{\ast})^{-}.
\end{equation*}
\end{theorem}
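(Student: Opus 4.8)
The plan is to reduce the scheduling problem to the already-solved problem \eqref{P-xAxxpqxc} and then invoke Theorem~\ref{T-xAxxpqxc} with a suitable substitution of parameters. The preliminary reduction in the text has already brought the scheduling problem to the form \eqref{P-xAxqxxpqp}, namely the minimization of $\bm{x}^{-}\bm{A}\bm{x}\oplus\bm{q}^{-}\bm{A}\bm{x}\oplus\bm{x}^{-}\bm{p}\oplus\bm{q}^{-}\bm{p}$. The key observation is that this is exactly \eqref{P-xAxxpqxc} with the row vector $\bm{q}^{-}$ replaced by $\bm{q}^{-}\bm{A}$ (so that the term $\bm{q}^{-}\bm{x}$ becomes $\bm{q}^{-}\bm{A}\bm{x}$) and the constant $c$ replaced by the scalar $\bm{q}^{-}\bm{p}$.

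First I would verify that the hypotheses of Theorem~\ref{T-xAxxpqxc} are met under this substitution. The matrix $\bm{A}$ is unchanged and has spectral radius $\lambda>\mathbb{0}$ by assumption. The role of the "regular vector $\bm{q}$" in the theorem is now played by a vector whose conjugate transpose is $\bm{q}^{-}\bm{A}$; since $\bm{q}$ is regular and $\bm{A}$ is row-regular (which follows from $\lambda>\mathbb{0}$, since a zero row would force a zero diagonal entry in every power and hence $\lambda=\mathbb{0}$), the vector $(\bm{q}^{-}\bm{A})^{-}$ is regular, and the required regularity condition holds. Here one should also note that the constant $c=\bm{q}^{-}\bm{p}$ is a well-defined scalar in $\mathbb{X}$.

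Next I would simply read off the conclusion of Theorem~\ref{T-xAxxpqxc} with these substitutions. The minimum value becomes
$$
\mu
=
\lambda
\oplus
\bigoplus_{m=1}^{n}((\bm{q}^{-}\bm{A})\bm{A}^{m-1}\bm{p})^{1/(m+1)}
\oplus
\bm{q}^{-}\bm{p}
=
\lambda
\oplus
\bigoplus_{m=1}^{n}(\bm{q}^{-}\bm{A}^{m}\bm{p})^{1/(m+1)}
\oplus
\bm{q}^{-}\bm{p},
$$
and since the $m=0$ term of $\bigoplus_{m=0}^{n}(\bm{q}^{-}\bm{A}^{m}\bm{p})^{1/(m+1)}$ is precisely $\bm{q}^{-}\bm{p}$, this collapses to the stated formula \eqref{E-mulambdaqAp}. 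Likewise the solution set $\bm{x}=(\mu^{-1}\bm{A})^{\ast}\bm{u}$ with $\mu^{-1}\bm{p}\leq\bm{u}\leq\mu((\bm{q}^{-}\bm{A})(\mu^{-1}\bm{A})^{\ast})^{-}$ is exactly what is claimed. The main obstacle, such as it is, is purely bookkeeping: one must carefully track the index shift (the exponent $m-1$ in Theorem~\ref{T-xAxxpqxc} becomes $m$ here because of the extra factor $\bm{A}$) and confirm that absorbing $\bm{q}^{-}\bm{p}$ into the join as the $m=0$ term is legitimate, i.e. that $(\bm{q}^{-}\bm{A}^{0}\bm{p})^{1/(0+1)}=\bm{q}^{-}\bm{p}$. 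No genuinely new argument is needed beyond this substitution and re-indexing.
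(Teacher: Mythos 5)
Your proof takes exactly the route the paper does: the paper's entire justification for this theorem is the remark that it follows from Theorem~\ref{T-xAxxpqxc} with $\bm{q}^{-}$ replaced by $\bm{q}^{-}\bm{A}$ and $c$ by $\bm{q}^{-}\bm{p}$, and your re-indexing (the exponent $m-1$ becoming $m$) and the absorption of $\bm{q}^{-}\bm{p}$ as the $m=0$ term of the join are both correct. The one place where your write-up goes wrong is the verification that the substituted ``$\bm{q}$'' is regular. For $(\bm{q}^{-}\bm{A})^{-}$ to be a regular vector, every entry of the row vector $\bm{q}^{-}\bm{A}$ must be nonzero, and by the paper's own remark in Section~\ref{S-pdar} this requires $\bm{A}$ to be \emph{column}-regular, not row-regular. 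Moreover, neither form of regularity follows from $\lambda>\mathbb{0}$: a matrix with a single nonzero diagonal entry and all other entries equal to $\mathbb{0}$ has positive spectral radius yet possesses both a zero row and a zero column --- a zero row only forces the corresponding diagonal entry of each power to vanish, not the whole trace. So column-regularity of $\bm{A}$ is a genuinely additional hypothesis that must be assumed for the substitution to be legitimate (the paper leaves it implicit, and it does hold in the numerical example); with that assumption in place, your argument is complete and coincides with the paper's.
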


\subsubsection{Numerical example}

Consider an example project that involves $n=3$ activities. Suppose that the matrix $\bm{A}$ and the vectors $\bm{p}$ and $\bm{q}$ are given by
$$
\bm{A}
=
\left(
\begin{array}{rrc}
2 & 4 & \mathbb{0}
\\
2 & 2 & 1
\\
0 & -1 & 1
\end{array}
\right),
\qquad
\bm{q}
=
\left(
\begin{array}{c}
1
\\
1
\\
1
\end{array}
\right),
\qquad
\bm{p}
=
\left(
\begin{array}{c}
3
\\
3
\\
3
\end{array}
\right),
$$
where the symbol $\mathbb{0}=-\infty$ is used to save writing.

To obtain the spectral radius $\lambda$, we successively calculate
$$
\bm{A}^{2}
=
\left(
\begin{array}{rrc}
6 & 6 & 5
\\
4 & 6 & 3
\\
2 & 4 & 2
\end{array}
\right),
\qquad
\bm{A}^{3}
=
\left(
\begin{array}{rrc}
8 & 10 & 7
\\
8 & 8 & 7
\\
6 & 6 & 5
\end{array}
\right),
\qquad
\lambda
=
3.
$$

Furthermore, we calculate
$$
\bm{q}^{-}\bm{A}
=
\left(
\begin{array}{ccc}
1 & 3 & 0
\end{array}
\right),
\qquad
\bm{q}^{-}\bm{A}^{2}
=
\left(
\begin{array}{ccc}
5 & 5 & 4
\end{array}
\right),
\qquad
\bm{q}^{-}\bm{A}^{3}
=
\left(
\begin{array}{ccc}
7 & 9 & 6
\end{array}
\right),
$$
and then obtain
$$
\bm{q}^{-}\bm{p}
=
2,
\qquad
\bm{q}^{-}\bm{A}\bm{p}
=
6,
\qquad
\bm{q}^{-}\bm{A}^{2}\bm{p}
=
8,
\qquad
\bm{q}^{-}\bm{A}^{3}\bm{p}
=
12,
\qquad
\mu
=
3.
$$

After evaluating the matrices
$$
\mu^{-1}\bm{A}
=
\left(
\begin{array}{rrr}
-1 & 1 & \mathbb{0}
\\
-1 & -1 & -2
\\
-3 & -4 & -1
\end{array}
\right),
\qquad
\mu^{-2}\bm{A}^{2}
=
\left(
\begin{array}{rrr}
0 & 0 & -1
\\
-2 & 0 & -3
\\
-4 & -2 & -2
\end{array}
\right),
$$
we can find
$$
(\mu^{-1}\bm{A})^{\ast}
=
\left(
\begin{array}{rrr}
0 & 1 & -1
\\
-1 & 0 & -2
\\
-3 & -2 & 0
\end{array}
\right),
\qquad
\bm{q}^{-}\bm{A}(\mu^{-1}\bm{A})^{\ast}
=
\left(
\begin{array}{ccc}
2 & 3 & 1
\end{array}
\right).
$$

By calculating the lower and upper bounds on the vector $\bm{u}$, we get
$$
\mu^{-1}\bm{p}
=
\left(
\begin{array}{c}
0
\\
0
\\
0
\end{array}
\right),
\qquad
\mu(\bm{q}^{-}\bm{A}(\mu^{-1}\bm{A})^{\ast})^{-}
=
\left(
\begin{array}{c}
1
\\
0
\\
2
\end{array}
\right).
$$

In terms of ordinary operations, the solution is given by the equalities
\begin{align*}
x_{1}
&=
\max(u_{1},u_{2}+1,u_{3}-1),
\\
x_{2}
&=
\max(u_{1}-1,u_{2},u_{3}-2),
\\
x_{3}
&=
\max(u_{1}-3,u_{2}-2,u_{3}),
\end{align*}
where the elements of the vector $\bm{u}$ satisfy the conditions
$$
0
\leq
u_{1}
\leq
1,
\qquad
u_{2}
=
0,
\qquad
0
\leq
u_{3}
\leq
2.
$$

Substitution of the bounds on $\bm{u}$ into the above equalities gives the result
$$
x_{1}
=
1,
\qquad
x_{2}
=
0,
\qquad
0\leq x_{3}\leq 2.
$$

\subsection{Constrained problem}

We now consider a new constrained problem which combines the objective function of the problem in \cite{Krivulin2012Solution} with inequality constraints from the problem in \cite{Krivulin2013Amultidimensional}. To solve the problem, we follow the method proposed in \cite{Krivulin2012Atropical}, which introduces an additional variable and then reduces the problem to the solving of a linear inequality with a matrix parametrized by the new variable.

Given matrices $\bm{A},\bm{B}\in\mathbb{X}^{n\times n}$ and vectors $\bm{p},\bm{g}\in\mathbb{X}^{n}$, we need to find all regular vectors $\bm{x}\in\mathbb{X}^{n}$ such that
\begin{equation}
\begin{aligned}
&
\text{minimize}
&&
\bm{x}^{-}\bm{A}\bm{x}\oplus\bm{x}^{-}\bm{p},
\\
&
\text{subject to}
&&
\bm{B}\bm{x}\oplus\bm{g}
\leq
\bm{x}.
\end{aligned}
\label{P-xAxxpBxgx}
\end{equation}

Note that, due to Theorem~\ref{T-Axbx}, the set of regular vectors that satisfy the inequality constraints in problem \eqref{P-xAxxpBxgx} is not empty if and only if $\mathop\mathrm{Tr}(\bm{B})\leq\mathbb{1}$.

\subsubsection{Solution to constrained problem}

The next assertion provides a direct complete solution to the problem.

\begin{theorem}\label{T-xAxxpBxgx}
Let $\bm{A}$ be a matrix with spectral radius $\lambda>\mathbb{0}$ and $\bm{B}$ be a matrix with $\mathop\mathrm{Tr}(\bm{B})\leq\mathbb{1}$. Denote
\begin{equation}
\theta
=
\lambda
\oplus
\bigoplus_{k=1}^{n-1}\mathop{\bigoplus\hspace{1.2em}}_{1\leq i_{1}+\cdots+i_{k}\leq n-k}\mathop\mathrm{tr}\nolimits^{1/k}(\bm{A}\bm{B}^{i_{1}}\cdots\bm{A}\bm{B}^{i_{k}}).
\label{E-theta}
\end{equation}

Then, the minimum in \eqref{P-xAxxpBxgx} is equal to $\theta$ and all regular solutions of the problem are given by
$$
\bm{x}
=
(\theta^{-1}\bm{A}\oplus\bm{B})^{\ast}\bm{u},
\qquad
\bm{u}
\geq
\theta^{-1}\bm{p}\oplus\bm{g}.
$$
\end{theorem}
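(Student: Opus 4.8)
The plan is to follow the same two-stage strategy used in the proof of Theorem~\ref{T-xAxxpqxc}: introduce an auxiliary scalar variable $\theta$, rewrite problem~\eqref{P-xAxxpBxgx} as the problem of minimizing $\theta$ subject to $\bm{x}^{-}\bm{A}\bm{x}\oplus\bm{x}^{-}\bm{p}\leq\theta$ together with the original constraint $\bm{B}\bm{x}\oplus\bm{g}\leq\bm{x}$, and then use this parametrized inequality twice — once to describe the solution set $\bm{x}$ for fixed $\theta$, and once to squeeze out a sharp lower bound on $\theta$. First I would split $\bm{x}^{-}\bm{A}\bm{x}\oplus\bm{x}^{-}\bm{p}\leq\theta$ into $\bm{x}^{-}\bm{A}\bm{x}\leq\theta$ and $\bm{x}^{-}\bm{p}\leq\theta$; by Lemma~\ref{L-minxAx} the first of these forces $\theta\geq\lambda>\mathbb{0}$, and by Lemma~\ref{L-IxdA} (using regularity of $\bm{x}$) the first and second become $\bm{A}\bm{x}\leq\theta\bm{x}$ and $\bm{p}\leq\theta\bm{x}$, respectively. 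Multiplying these by $\theta^{-1}$ and combining with $\bm{B}\bm{x}\oplus\bm{g}\leq\bm{x}$, the whole constraint system collapses to the single tropical inequality
\begin{equation*}
(\theta^{-1}\bm{A}\oplus\bm{B})\bm{x}\oplus(\theta^{-1}\bm{p}\oplus\bm{g})
\leq
\bm{x}.
\end{equation*}

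Next I would apply Theorem~\ref{T-Axbx} to this inequality with matrix $\bm{M}=\theta^{-1}\bm{A}\oplus\bm{B}$ and vector $\bm{b}=\theta^{-1}\bm{p}\oplus\bm{g}$. This requires checking $\mathop\mathrm{Tr}(\theta^{-1}\bm{A}\oplus\bm{B})\leq\mathbb{1}$, which is exactly where the definition~\eqref{E-theta} of $\theta$ comes from and which I expect to be the main obstacle. To verify it I would expand $\mathop\mathrm{Tr}(\theta^{-1}\bm{A}\oplus\bm{B})=\bigoplus_{m=1}^{n}\mathop\mathrm{tr}(\theta^{-1}\bm{A}\oplus\bm{B})^{m}$ using the binomial trace identity~\eqref{E-trABm}, so that $\mathop\mathrm{tr}(\theta^{-1}\bm{A}\oplus\bm{B})^{m}$ becomes a sum of terms $\theta^{-k}\mathop\mathrm{tr}(\bm{A}\bm{B}^{i_{1}}\cdots\bm{A}\bm{B}^{i_{k}})$ with $i_{1}+\cdots+i_{k}=m-k$, plus the pure term $\mathop\mathrm{tr}\bm{B}^{m}$. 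The pure $\bm{B}$-terms contribute $\mathop\mathrm{Tr}(\bm{B})\leq\mathbb{1}$ by hypothesis; a mixed term with $k=1$, $i_{1}=0$ contributes $\theta^{-1}\mathop\mathrm{tr}\bm{A}\leq\theta^{-1}\lambda\leq\mathbb{1}$ since $\theta\geq\lambda$ and $\lambda\geq\mathop\mathrm{tr}\bm{A}$; and a general mixed term $\theta^{-k}\mathop\mathrm{tr}(\bm{A}\bm{B}^{i_{1}}\cdots\bm{A}\bm{B}^{i_{k}})$ is $\leq\mathbb{1}$ precisely when $\theta^{k}\geq\mathop\mathrm{tr}(\bm{A}\bm{B}^{i_{1}}\cdots\bm{A}\bm{B}^{i_{k}})$, i.e. $\theta\geq\mathop\mathrm{tr}^{1/k}(\bm{A}\bm{B}^{i_{1}}\cdots\bm{A}\bm{B}^{i_{k}})$ — which holds by~\eqref{E-theta}. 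The ranges on the indices in~\eqref{E-theta} ($1\leq k\leq n-1$ and $1\leq i_{1}+\cdots+i_{k}\leq n-k$) must be checked to cover all mixed terms arising for $1\leq m\leq n$; terms with $k=n$ (hence all $i_{j}=0$) reduce to $\theta^{-n}\mathop\mathrm{tr}\bm{A}^{n}\leq\lambda^{-n}\lambda^{n}=\mathbb{1}$ and so need no separate summand in~\eqref{E-theta}, and terms with some $i_{j}=0$ inside a longer product are dominated by cyclic-permutation and the $\mathop\mathrm{tr}(\bm{A}\bm{B})=\mathop\mathrm{tr}(\bm{B}\bm{A})$ identity combined with the cases already handled.

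Once $\mathop\mathrm{Tr}(\theta^{-1}\bm{A}\oplus\bm{B})\leq\mathbb{1}$ is established, Theorem~\ref{T-Axbx} gives that the regular solutions of the reduced inequality are exactly $\bm{x}=(\theta^{-1}\bm{A}\oplus\bm{B})^{\ast}\bm{u}$ with $\bm{u}\geq\theta^{-1}\bm{p}\oplus\bm{g}$, which is nonempty (the lower bound $\theta^{-1}\bm{p}\oplus\bm{g}$ itself is an admissible $\bm{u}$). It remains to confirm that $\theta$ as defined in~\eqref{E-theta} is actually attained, i.e. that the infimum over feasible $\theta$ equals the right-hand side of~\eqref{E-theta}. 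For the lower bound I would argue that any feasible $\theta$ must satisfy $\theta\geq\lambda$ (from $\bm{x}^{-}\bm{A}\bm{x}\geq\lambda$) and must satisfy each inequality $\theta\geq\mathop\mathrm{tr}^{1/k}(\bm{A}\bm{B}^{i_{1}}\cdots\bm{A}\bm{B}^{i_{k}})$ in the allowed index range — this comes from reversing the trace-expansion argument above, since $\mathop\mathrm{Tr}(\theta^{-1}\bm{A}\oplus\bm{B})\leq\mathbb{1}$ is a \emph{necessary} condition for a regular $\bm{x}$ to exist (by the second part of Theorem~\ref{T-Axbx}), and the trace identity shows each such term sits inside $\mathop\mathrm{Tr}(\theta^{-1}\bm{A}\oplus\bm{B})$ and hence is bounded by $\mathbb{1}$. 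Taking the supremum of all these necessary lower bounds yields exactly~\eqref{E-theta}; substituting this value of $\theta$ back and using the continuity/monotonicity already exploited gives the minimum and the stated solution set, completing the proof.
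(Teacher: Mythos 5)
Your proposal follows essentially the same route as the paper's proof: reduce the system to the single inequality $(\theta^{-1}\bm{A}\oplus\bm{B})\bm{x}\oplus\theta^{-1}\bm{p}\oplus\bm{g}\leq\bm{x}$, apply Theorem~\ref{T-Axbx}, and extract the sharp lower bound on $\theta$ from the existence condition $\mathop\mathrm{Tr}(\theta^{-1}\bm{A}\oplus\bm{B})\leq\mathbb{1}$ via the binomial trace identity \eqref{E-trABm}, with the terms having all $i_{j}=0$ absorbed by $\theta\geq\lambda$. The only cosmetic difference is that you pass through Lemma~\ref{L-IxdA} to turn $\bm{x}^{-}\bm{A}\bm{x}\oplus\bm{x}^{-}\bm{p}\leq\theta$ into $\theta^{-1}\bm{A}\bm{x}\oplus\theta^{-1}\bm{p}\leq\bm{x}$, whereas the paper does this equivalence directly by left multiplication with $\theta^{-1}\bm{x}$ and $\theta\bm{x}^{-}$; the argument is otherwise the same.
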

\begin{proof}
First, we note that $\bm{x}^{-}\bm{A}\bm{x}\oplus\bm{x}^{-}\bm{p}\geq\bm{x}^{-}\bm{A}\bm{x}\geq\lambda>\mathbb{0}$, and thus the objective function in problem \eqref{P-xAxxpBxgx} is bounded from below. We now denote by $\theta$ the minimum of the objective function over all regular vectors $\bm{x}$ and verify that the value of $\theta$ is given by \eqref{E-theta}.

The set of all regular vectors $\bm{x}$ that yield the minimum in \eqref{P-xAxxpBxgx} is given by the system
$$
\bm{x}^{-}\bm{A}\bm{x}\oplus\bm{x}^{-}\bm{p}
=
\theta,
\qquad
\bm{B}\bm{x}\oplus\bm{g}
\leq
\bm{x}.
$$

Since $\theta$ is the minimum, the solution set does not change if we replace the first equation by the inequality
$$
\bm{x}^{-}\bm{A}\bm{x}\oplus\bm{x}^{-}\bm{p}
\leq
\theta.
$$

Furthermore, the last inequality is equivalent to the inequality
$$
\theta^{-1}\bm{A}\bm{x}\oplus\theta^{-1}\bm{p}
\leq
\bm{x}.
$$

Indeed, by multiplying the former inequality by $\theta^{-1}\bm{x}$ from the left, we obtain
$$
\theta^{-1}\bm{A}\bm{x}\oplus\theta^{-1}\bm{p}
\leq
\theta^{-1}\bm{x}\bm{x}^{-}\bm{A}\bm{x}\oplus\theta^{-1}\bm{x}\bm{x}^{-}\bm{p}
\leq
\bm{x},
$$
which yields the latter. On the other hand, the left multiplication of the latter inequality by $\theta\bm{x}^{-}$ immediately results in the former one.

The system, which determines all regular solutions, now becomes
$$
\theta^{-1}\bm{A}\bm{x}\oplus\theta^{-1}\bm{p}
\leq
\bm{x},
\qquad
\bm{B}\bm{x}\oplus\bm{g}
\leq
\bm{x}.
$$

The above two inequalities are equivalent to one inequality in the form
\begin{equation}
(\theta^{-1}\bm{A}\oplus\bm{B})\bm{x}
\oplus
\theta^{-1}\bm{p}\oplus\bm{g}
\leq
\bm{x}.
\label{I-thetaABxthetapgx}
\end{equation}

It follows from Theorem~\ref{T-Axbx} that the existence condition of regular solutions to inequality \eqref{I-thetaABxthetapgx} is given by the inequality
$$
\mathop\mathrm{Tr}(\theta^{-1}\bm{A}\oplus\bm{B})
\leq
\mathbb{1}.
$$

The application of binomial identity \eqref{E-trABm} to the left-hand side of the inequality, followed by the rearrangements of terms leads to the inequality
\begin{multline*}
\mathop\mathrm{Tr}(\theta^{-1}\bm{A}\oplus\bm{B})
=
\bigoplus_{m=1}^{n}\mathop\mathrm{tr}(\theta^{-1}\bm{A}\oplus\bm{B})^{m}
\\
=
\bigoplus_{m=1}^{n}\bigoplus_{k=1}^{m}\mathop{\bigoplus\hspace{2.3em}}_{i_{1}+\cdots+i_{k}=m-k}\theta^{-k}\mathop\mathrm{tr}(\bm{A}\bm{B}^{i_{1}}\cdots\bm{A}\bm{B}^{i_{k}})
\oplus
\mathop\mathrm{Tr}(\bm{B})
\\
=
\bigoplus_{k=1}^{n}\mathop{\bigoplus\hspace{1.2em}}_{0\leq i_{1}+\cdots+i_{k}\leq n-k}\theta^{-k}\mathop\mathrm{tr}(\bm{A}\bm{B}^{i_{1}}\cdots\bm{A}\bm{B}^{i_{k}})
\oplus
\mathop\mathrm{Tr}(\bm{B}).
\end{multline*}

Since $\mathop\mathrm{Tr}(\bm{B})\leq\mathbb{1}$ by the conditions of the theorem, the existence condition reduces to the inequalities
$$
\mathop{\bigoplus\hspace{1.2em}}_{0\leq i_{1}+\cdots+i_{k}\leq n-k}\theta^{-k}\mathop\mathrm{tr}(\bm{A}\bm{B}^{i_{1}}\cdots\bm{A}\bm{B}^{i_{k}})
\leq
\mathbb{1},
\qquad
k=1,\ldots,n.
$$

After solving the inequalities with respect to $\theta$, and combining of the obtained solutions into one inequality, we obtain
$$
\theta
\geq
\lambda
\oplus
\bigoplus_{k=1}^{n-1}\mathop{\bigoplus\hspace{1.2em}}_{1\leq i_{1}+\cdots+i_{k}\leq n-k}\mathop\mathrm{tr}\nolimits^{1/k}(\bm{A}\bm{B}^{i_{1}}\cdots\bm{A}\bm{B}^{i_{k}}).
$$

Taking into account that $\theta$ is the minimum of the objective function, we replace the last inequality by an equality, which gives \eqref{E-theta}.

By applying Theorem~\ref{T-Axbx} to write the solution for inequality \eqref{I-thetaABxthetapgx}, we finally obtain.
$$
\bm{x}
=
(\theta^{-1}\bm{A}\oplus\bm{B})^{\ast}\bm{u},
\qquad
\bm{u}
\geq
\theta^{-1}\bm{p}\oplus\bm{g}.
\qedhere
$$
\end{proof}

\subsubsection{Application to project scheduling}

Consider a project that consists of $n$ activities operating under start-finish, start-start, early start and early finish temporal constraints. The start-finish constraints determine the minimum allowed time lag between the initiation of one activity and the completion of another activity. The start-start constraints dictate the minimum lag between the initiation times of the activities. The early start constraints define the earliest possible times to initiate the activities. The early finish constraints require that the activities cannot be completed earlier than prescribed times. All activities in the project are completed as soon as possible to meet the constraints. The problem is to minimize the flow time over all activities.

In the similar manner as before, for each activity $i=1,\ldots,n$, we denote the initiation time by $x_{i}$ and the completion time by $y_{i}$. Given the start-finish time lags $a_{ij}$ and the earliest finish times $p_{i}$, we have
$$
y_{i}
=
\max(a_{i1}+x_{1},\ldots,a_{in}+x_{n},p_{i}),
\qquad
i=1,\ldots,n.
$$

Let $b_{ij}$ be the minimum allowed time lag between the initiation of activity $j=1,\ldots,n$ and the initiation of activity $i$, and $g_{i}$ be the earliest acceptable time of the initiation of $i$. The start-start and early start constraints give the inequalities
$$
x_{i}
\geq
\max(b_{i1}+x_{1},\ldots,a_{in}+x_{n},g_{i}),
\qquad
i=1,\ldots,n.
$$

After adding the objective function, we represent the scheduling problem in the form
\begin{equation*}
\begin{aligned}
&
\text{minimize}
&&
\max_{1\leq i\leq n}(y_{i}-x_{i}),
\\
&
\text{subject to}
&&
y_{i}
=
\max\left(\max_{1\leq j\leq n}(a_{ij}+x_{j}),p_{i}\right),
\\
&&&
x_{i}
\geq
\max\left(\max_{1\leq j\leq n}(b_{ij}+x_{j}),g_{i}\right),
\qquad
i=1,\ldots,n.
\end{aligned}
\end{equation*}

To rewrite the problem in terms of the semifield $\mathbb{R}_{\max,+}$, we define the following matrices and vectors
$$
\bm{A}
=
(a_{ij}),
\qquad
\bm{B}
=
(b_{ij}),
\qquad
\bm{p}
=
(p_{i}),
\qquad
\bm{g}
=
(g_{i}),
\qquad
\bm{x}
=
(x_{i}).
$$

In vector form, the problem becomes
\begin{equation*}
\begin{aligned}
&
\text{minimize}
&&
\bm{x}^{-}(\bm{A}\bm{x}\oplus\bm{p}),
\\
&
\text{subject to}
&&
\bm{B}\bm{x}\oplus\bm{g}
\leq
\bm{x}.
\end{aligned}
\end{equation*}

Clearly, the obtained problem is of the same form as problem \eqref{P-xAxxpBxgx} and thus admits a complete solution given by Theorem~\ref{T-xAxxpBxgx}.

\subsection{Numerical example}

To illustrate the result, we take a project with $n=3$ activities under constraints given by
$$
\bm{A}
=
\left(
\begin{array}{ccc}
4 & 0 & \mathbb{0}
\\
2 & 3 & 1
\\
1 & 1 & 3
\end{array}
\right),
\quad
\bm{B}
=
\left(
\begin{array}{rrr}
\mathbb{0} & -2 & 1
\\
0 & \mathbb{0} & 2
\\
-1 & \mathbb{0} & \mathbb{0}
\end{array}
\right),
\quad
\bm{p}
=
\left(
\begin{array}{c}
6
\\
6
\\
6
\end{array}
\right),
\quad
\bm{g}
=
\left(
\begin{array}{c}
1
\\
2
\\
3
\end{array}
\right).
$$

Then, we need to verify the existence condition for regular solutions in Theorem~\ref{T-xAxxpBxgx}. We have
$$
\bm{B}^{2}
=
\left(
\begin{array}{rrr}
0 & \mathbb{0} & 0
\\
1 & -2 & 1
\\
\mathbb{0} & -3 & 0
\end{array}
\right),
\qquad
\bm{B}^{3}
=
\left(
\begin{array}{rrr}
-1 & -2 & 1
\\
0 & -1 & 2
\\
-1 & \mathbb{0} & -1
\end{array}
\right),
\qquad
\mathop\mathrm{Tr}(\bm{B})
=
0.
$$

In addition, we obtain
$$
\bm{A}^{2}
=
\left(
\begin{array}{ccc}
8 & 4 & 1
\\
6 & 6 & 4
\\
5 & 4 & 6
\end{array}
\right),
\qquad
\bm{A}^{3}
=
\left(
\begin{array}{ccc}
12 & 8 & 5
\\
10 & 9 & 7
\\
9 & 7 & 9
\end{array}
\right),
\qquad
\lambda
=
4.
$$

Furthermore, we successively calculate
\begin{gather*}
\bm{A}\bm{B}
=
\left(
\begin{array}{rrr}
0 & 2 & 5
\\
3 & 0 & 5
\\
2 & -1 & 3
\end{array}
\right),
\qquad
\bm{A}\bm{B}^{2}
=
\left(
\begin{array}{rrr}
4 & -2 & 4
\\
4 & 1 & 4
\\
2 & 0 & 3
\end{array}
\right),
\\
\bm{A}^{2}\bm{B}
=
\left(
\begin{array}{rrr}
4 & 6 & 9
\\
6 & 4 & 8
\\
5 & 3 & 6
\end{array}
\right),
\qquad
\theta
=
4.
\end{gather*}

We now evaluate the matrices
$$
\theta^{-1}\bm{A}\oplus\bm{B}
=
\left(
\begin{array}{rrr}
0 & -2 & 1
\\
0 & -1 & 2
\\
-1 & -3 & -1
\end{array}
\right),
\quad
(\theta^{-1}\bm{A}\oplus\bm{B})^{2}
=
\left(
\begin{array}{rrr}
0 & -2 & 1
\\
1 & -1 & 1
\\
-1 & -3 & 0
\end{array}
\right),
$$
and then find
$$
(\theta^{-1}\bm{A}\oplus\bm{B})^{\ast}
=
\left(
\begin{array}{rrr}
0 & -2 & 1
\\
1 & 0 & 2
\\
-1 & -3 & 0
\end{array}
\right),
\qquad
\theta^{-1}\bm{p}\oplus\bm{g}
=
\left(
\begin{array}{c}
2
\\
2
\\
3
\end{array}
\right).
$$

Returning to the usual notation, we have the solution
\begin{align*}
x_{1}
&=
\max(u_{1},u_{2}-2,u_{3}+1),
\\
x_{2}
&=
\max(u_{1}+1,u_{2},u_{3}+2),
\\
x_{3}
&=
\max(u_{1}-1,u_{2}-3,u_{3}),
\end{align*}
where the components of the vector $\bm{u}$ are given by the inequalities
$$
u_{1}
\geq
2,
\qquad
u_{2}
\geq
2,
\qquad
u_{3}
\geq
3.
$$

\section*{Acknowledgments}

The author sincerely thanks two anonymous reviewers for their insightful comments, valuable suggestions, and corrections. He is extremely grateful for pointing out the incomplete argument of Theorem~\ref{T-xAxxpqxc} in the first draft, which has given rise to a substantial reconstruction of the theorem and related results in the revised manuscript.

\bibliographystyle{abbrvurl}
\bibliography{Extremal_properties_of_tropical_eigenvalues_and_solutions_to_tropical_optimization_problems}

\end{document}